\journal{Journal of Multivariate Analysis}
\newtheorem{thm}{Theorem}
\newtheorem{prop}[thm]{\bf Proposition}
\newtheorem{defi}{Definition}
\newtheorem{cond}{\bf Condition}
\newtheorem{example}{Example}
\newtheorem{rem}{\textbf{Remark}}
\let\oldproofname=\proofname
\renewcommand{\proofname}{\rm\bf{\oldproofname}}
\newcommand{\pp}{\mbox{\boldmath $p$}}
\def\indistr{\buildrel {d} \over =}
\def\Ac{\mbox{$\mathcal A$}}
\def\Dc{{\mathcal D}}
\def\Ec{\mbox{$\mathcal E$}}
\def\Fc{\mbox{$\mathcal F$}}
\def\Gc{{\mathcal G}}
\def\Ic{\mbox{$\mathcal I$}}
\def\Mc{\mbox{$\mathcal M$}}
\def\Nc{\mbox{$\mathcal N$}}
\def\Tc{\mbox{$\mathcal T$}}
\def\Uc{\mbox{$\mathcal U$}}
\def\Xc{\mbox{$\mathcal X$}}
\def\Gb{{\mathbb G}}
\def\Lb{{\mathbb L}}
\def\Nb{{\mathbb N}}
\def\Gb{{\mathbb G}}
\def\Pb{{\mathbb P}}
\def\Rb{\mbox{$\mathbb R$}}
\def\EE{ {\rm I} \kern-.15em {\rm E} }
\def\PP{ {\rm I} \kern-.15em {\rm P} }
\def\u{ {\bf u}}
\def\x{ {\bf x}}
\def\y{ {\bf y}}
\def\z{ {\bf z}}
\def\A{ {\bf A}}
\def\I{ {\bf I}}
\def\U{ {\bf U}}
\def\V{ {\bf V}}
\def\X{ {\bf X}}
\def\Y{ {\bf Y}}
\def\Z{ {\bf Z}}
\def\mubf{ {\bm \mu}}
\def\Sigmabf{ {\bm \Sigma}}
\def\Omegabf{ {\bm \Omega}}
\def\lambdabf{ {\bm \lambda}}
\def\0{{\mathbf 0} }
\def\1{{\mathbf 1} }
\def\mds{\medskip}
\def\NA{{\texttt{NA}}}
\newcommand\ForIf[3]{%
    \ForIf@for{#1}%
    \ForIf@if{#2}%
    \ForIf@then{#3}%
}
\begin{document}

\begin{frontmatter}

\title{Identifiability and estimation of meta-elliptical copula generators}


\author[alexis]{A.~Derumigny}
\ead{A.F.F.Derumigny@tudelft.nl}
\author[jdf]{J.-D.~Fermanian\texorpdfstring{\corref{cor1}}{}}
\ead{jean-david.fermanian@ensae.fr}
\cortext[cor1]{Corresponding author}
\address[alexis]{Department of Applied Mathematics, Delft University of Technology, Netherlands}
\address[jdf]{Crest-Ensae, 5 av. Henry le Chatelier, 91764 Palaiseau cedex, France}

\begin{abstract}
Meta-elliptical copulas are often proposed to model dependence between the components of a random vector. 
They are specified by a correlation matrix and a map $g$, called density generator. 
While the latter correlation matrix can easily be estimated from pseudo-samples of observations, the density generator is harder to estimate, especially when it does not belong to a parametric family.
We give sufficient conditions to non-parametrically identify this generator. Several nonparametric estimators of $g$ are then proposed, by M-estimation, simulation-based inference, 
or by an iterative procedure available in the \texttt{R} package \texttt{ElliptCopulas}. Some simulations illustrate the relevance of the latter method.
\end{abstract}


\begin{keyword}
Identifiability; meta-elliptical copulas; elliptical generator; recursive algorithm.
\MSC[2020] Primary 62H05 \sep
Secondary 62H12
\end{keyword}

\end{frontmatter}

\section{Introduction}

Elliptically contoured distributions are usual semi-parametric extensions of multivariate Gaussian or Student distributions.
They correspond to continuous distributions on $\Rb^d$ whose isodensity curves (with respect to the Lebesgue measure) are ellipsoids: see, e.g., 
\cite{cambanis1981_theory,FangKotzNg,gomez2003_survey,kelker1970_distribution}.
To be specific, let $\X$ be a random vector in $\Rb^d$ whose elliptical distribution is parameterized by a vector $\mubf\in \Rb^d$, a positive definite matrix $\Omegabf=[\Omega_{i,j}]_{1\leq i,j \leq d}$ and a measurable function
$g:\Rb^+ \rightarrow \Rb^+\cup \{+\infty\}$. Its density with respect to the Lebesgue measure in $\Rb^d$ is
    \begin{equation}
    f_{\X}(\x) = {|\Omegabf|}^{-1/2}
    g\left( (\x-\mubf)^\top \, \Omegabf^{-1} \, (\x-\mubf) \right),\;\x \in \Rb^d.
    \label{def_dens_ellip}
    \end{equation}
This distribution is denoted $\Ec_d(\mubf, \Omegabf, g)$ and its cumulative distribution function by $H_{g,\Omegabf,d}$. 
To specify the law of the random vector $\X$, we will write $\X \sim \Ec_d(\mubf, \Omegabf, g)$.
The map $g$ is called ``density generator'', or simply ``generator''.
By integration of~(\ref{def_dens_ellip}), a density generator of an elliptical vector satisfies the constraint
\begin{equation}
    s_d\int_0^{\infty} r^{d-1} g(r^2)\, dr =s_d\int_0^{\infty} t^{d/2-1} g(t)\, dt/2 =1,
    \label{cond_dens_gen}
\end{equation}
where $s_d:=2 \pi^{d/2}/\Gamma(d/2)$ is the surface area of the unit ball in $\Rb^d$, $d\geq 2$ ($s_1=2$).
Conversely, any nonnegative function $g$ that satisfies~(\ref{cond_dens_gen}) can be used as the density generator of an elliptical distribution.
For example, the density generator of a Gaussian distribution is
$ g_{Gauss}(u ) := \exp(-u/2) / (2\pi)^{d/2}$.

\begin{rem}
\textnormal{
It is possible to define elliptical distributions with singular matrices $\Omegabf$: see~\cite{cambanis1981_theory}. 
In such cases, $\lambdabf^\top \X=0$ a.s. for some vector $\lambdabf $ in $\Rb^d$, and the law of $\X$ 
is supported on an affine subspace of $\Rb^d$.
Such ``degenerate'' cases will not be considered in this paper.
}
\end{rem}

A $d$-dimensional copula $C$ is said meta-elliptical (or simply ``elliptical'') if there exists an elliptical distribution in $\Rb^d$ whose copula is $C$. 
Due to the invariance of copulas by location-scale transforms, an elliptical copula depends on a generator and a correlation matrix only.
Indeed, for any
$\X \sim \Ec_d(\0,\Omegabf,g)$, set
$\Y := (X_1/\sqrt{\Omega_{1,1}},\ldots,X_d/\sqrt{\Omega_{d,d}})^\top$. Then, $\Y \sim \Ec_d(\0,\Sigmabf,g)$ for a correlation matrix
$\Sigmabf:=[\Omega_{i,j}/(\Omega_{i,i}\Omega_{j,j})^{1/2}]$. Obviously, the meta-elliptical copulas of $\X$ and $\Y$ are the same.
Thus, for any correlation matrix $\Sigmabf$, denote by $\Mc\Ec_d(\Sigmabf, g)$ the (unique) meta-elliptical copula that corresponds to the elliptical distribution $\Ec_d(\0, \Sigmabf, g)$. More generally, this copula corresponds to the elliptical distributions $\Ec_d(\mu, \Omegabf, g)$ for any $\mu, \Omegabf$, such that $\Sigmabf$ is the correlation matrix associated to $\Omegabf$.

\mds

A trans-elliptical distribution \cite{liu2012_transelliptical, liu2016semiparametric} is a distribution whose copula is meta-elliptical.
Trans-elliptical distributions (\cite{fang2002_metaelliptical}) extend elliptical distributions, by allowing 
the associated margins to be arbitrarily specified. 
For any correlation matrix $\Sigmabf$, we will denote by $\Tc\Ec_d(\Sigmabf, g, F_1, \dots , F_d)$ the trans-elliptical distribution whose copula is $\Mc\Ec_d(\Sigmabf, g)$ and marginal cdfs' are $F_1, \dots, F_d$. 

\mds 

The probabilistic properties of meta-elliptical copulas and their statistical analysis have been studied in several papers in the literature:
their conditional distributions and dependence measures~\cite{fang2002_metaelliptical}, the estimation of the correlation matrix $\Sigma$~\cite{wegkamp2016adaptive}, 
stochastic ordering~\cite{abdous2005dependence},
sampling methods~\cite{wang2013practical}, their tail dependence
function~\cite{kluppelberg2007estimating, kluppelberg2008semi, kostadinov2005_nonparametric}, their use semi-parametric regressions~\cite{zhao2019inference} or some goodness-of-fit tests~\cite{jaser2017simple,jaser2020tests}, their relationships with partial and conditional correlations~\cite{kurowicka2000_elliptical},
etc., are notable contributions.
Thanks to Sklar's theorem, meta-elliptical copulas can be used as key components of many flexible multivariate models, 
far beyond elliptically-distributed random vectors. Moreover, the literature has considered parametric families of generators that include 
the popular Gaussian and/or Student copulas as particular cases, with 
practical applications in hydrology \cite{genest2007_metaelliptical, song2010meta}, risk management \cite{embrechts2002_correlation, frahm2003elliptical}, econometrics \cite{sancetta2009_forecasting}, biology, etc.

\mds

To the best of our knowledge, all these papers assume the generator of a meta-elliptical copula is known, 
possibly up to finite dimensional parameter. Thus, the problem of estimating $g$ itself is bypassed.
For instance,~\cite{genest2007_metaelliptical} proposed a graphical tool to select a ``well-suited'' generator among a finite set of potential generators.
Approximations of meta-elliptical copulas are proposed in \cite{touboul2011goodness} through projection pursuit techniques but the consistency of the proposed algorithm seems to occur only under very restrictive conditions.
Actually, a general nonparametric estimation of the generator $g$ is problematic.
As noticed in Genest et al. \cite{genest2007_metaelliptical} : ``The estimation of g is more complex, considering that it is a functional parameter. Indeed, a rigorous approach to this
problem has yet to be developed. Financial applications to date have simply treated g as fixed; however, several possible choices of g have often been considered to assess the robustness of
the conclusions derived from the model.''
Therefore, until now, no nonparametric consistent estimator of $g$ seems to be available in the literature. This should not be surprising. Indeed, a preliminary point would be to state the
identifiability of $g$ from the knowledge of the underlying copula. This result is far from obvious and is one of the main contributions of our work.

\mds

Let us explain why this is the case. 
As recalled in the appendix, all margins of a distribution $\Ec_d(\0,\Sigmabf,g)$ have the same density $f_g$, where, for every $t \in \Rb$,
\begin{equation}
    f_g(t) = \frac{\pi^{(d-1)/2}}{\Gamma((d-1)/2)}\int_0^{+\infty} g(t^2+s) s^{(d-3)/2} \, ds.
\label{def_density_fg}
\end{equation}
Note that $f_g$ is even.
For notational convenience, we do not write the dependency of $f_g$ on the dimension $d$.
Set its marginal cdfs' $F_g(x)=\int_{-\infty}^x f_g(t)\, dt$ for every real number $x$, and its quantile function $Q_g(u)=\inf \{x ; F_g(x) \geq u\}$, $u\in [0,1]$.
By Sklar's theorem, our meta-elliptical distribution $C\sim \Mc\Ec_d(\Sigmabf, g)$ is given by
\begin{align}
    C(\u) &= H_{g,\Sigmabf,d}\big(Q_g(u_1),\ldots , Q_g(u_d)\big) = {|\Sigmabf|}^{-1/2}\mathlarger{\int_{- \infty}^{Q_g(u_1)}\cdots  \int_{- \infty}^{Q_g(u_d)}}
    g \big( \x^\top\Sigmabf^{-1} \x \big) \, d\x,
    \label{def_cop_meta_ellip}
\end{align}
for every $\u\in (0,1)^d$.
Hence, the associated meta-elliptical copula density with respect to the Lebesgue measure exists and may be defined as
\begin{equation}
    c(\u)
    := \frac{g \big( \vec{Q}_g(\u) \Sigmabf^{-1} \vec{Q}_g(\u)^\top \big)}
    {|\Sigmabf|^{1/2}\prod_{k=1}^d f_g\big(Q_g(u_k)\big)
    },\;\; \text{where }
    \vec{Q}_g(\u) := \big[ Q_g(u_1),\ldots,Q_g(u_d)\big],
    \label{def_cg_mle}
\end{equation}
for every $\u\in [0,1]^d$.
The latter density and cumulative distribution function will be denoted by $c_g$ and $C_g$ respectively, 
when we want to stress its dependence with respect to $g$.

\mds



\mds

Concerning the inference of meta-elliptical copulas, the usual estimator of the matrix $\Sigmabf$ has been known for a long time and is based on empirical Kendall's tau (see below). 
When one observes i.i.d. realizations of elliptically-distributed random vectors, several estimators of the generator $g$ have been proposed in the literature (see our appendix).
This paper is related to the same purpose, but for trans-elliptical distributions, or, equivalently, for meta-elliptical copulas.
In such a case, the inference of $g$ is more difficult than for elliptical laws because copula densities depend on $g$ through a highly nonlinear and complex relationship. Moreover, it is not known whether the mapping $g \mapsto c_g$ is one-to-one.
To the best of our knowledge, this problem has never been tackled in the literature. Authors only rely on ad-hoc chosen generators, or on
parametric families of generators.

\mds

In Section~\ref{identif}, we give sufficient conditions for the identifiability of the generator of a meta-elliptical copula.
Estimation procedures of $g$ are proposed in Section~\ref{inference_section}.
They allow the nonparametric estimation of an assumed trans-elliptical distribution, because $\Sigmabf$ and its margins are easily estimated beside.
Section~\ref{sec:NPMLE} proposes a penalized M-estimation of $g$, when a simulation-based estimator is given in Section~\ref{simul_dens_ellip}.
Section~\ref{sec:iteration_algorithm_estimation} states a numerical iterative procedure to evaluate $g$. 
It will be called MECIP, as ``Meta-Elliptical Copula Iterative Procedure''. Its performances are evaluated in Section~\ref{numerical_results} and it is implemented in the \texttt{R} package \texttt{ElliptCopulas}~\cite{package_ElliptCopulas}. 

\section{Identifiability of meta-elliptical copulas}
\label{identif}

Consider a meta-elliptical copula $C=\Mc\Ec_d(\Sigmabf, g)$ where $\Sigmabf$ is a correlation matrix and $g$
satisfies the usual normalization constraint~(\ref{cond_dens_gen}).
The question is to know whether the latter parameterization is unique.
Strictly speaking, this is the same question as for trans-elliptical distributions when their margins are known.

\mds

For any meta- or trans-elliptical distribution, the correlation matrix $ \Sigmabf$ is identifiable.
Indeed, it is well-known there exists a nice relationship between its components and the underlying Kendall's tau:
$\Sigma_{k,l}=\sin(\pi \tau_{k,l}/2)$, $k\neq l$, where $\tau_{k,l}$ is the Kendall's tau between $X_k$ and $X_l$.
See \cite{wegkamp2016adaptive} and the references therein, for instance.
Since every $\tau_{k,l}$ is uniquely defined by the underlying copula of $\X$, this is still the case for $\Sigmabf$ too.

\mds

\begin{prop}
    If $\U \sim \Mc\Ec_d(\Sigmabf, g)$ and $\U \sim \Mc\Ec_d(\tilde \Sigmabf, \tilde g)$ where $\Sigmabf$ and $\tilde \Sigmabf$ are two correlation matrices, then $\Sigmabf = \tilde \Sigmabf$.
\end{prop}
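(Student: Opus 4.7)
The plan is to prove identifiability of $\Sigmabf$ by exploiting the well-known relationship $\Sigma_{k,l}=\sin(\pi \tau_{k,l}/2)$ between the off-diagonal entries of the correlation matrix of a meta-elliptical copula and its pairwise Kendall's taus, which is in fact stated in the paragraph just preceding the proposition. The key point is that this formula does \emph{not} depend on the generator $g$, so two different parameterizations of the same copula must yield the same $\Sigmabf$.

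First I would recall that, for any random vector $\U$ with continuous joint distribution on $(0,1)^d$, the pairwise Kendall's tau $\tau_{k,l}$ between $U_k$ and $U_l$ is a functional of the bivariate margin $C_{k,l}$ of $C$, namely $\tau_{k,l}=4\int_{[0,1]^2} C_{k,l}(u,v)\,dC_{k,l}(u,v)-1$. Consequently, if $\U\sim \Mc\Ec_d(\Sigmabf,g)$ and $\U\sim \Mc\Ec_d(\tilde \Sigmabf,\tilde g)$, the two parameterizations generate the same value of $\tau_{k,l}$ for every pair $(k,l)$.

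Next, I would invoke the classical Kendall's tau formula for elliptical distributions (see, e.g., Lindskog--McNeil--Schmock, or Fang--Fang--Kotz~\cite{fang2002_metaelliptical}): provided the underlying elliptical vector is absolutely continuous, one has $\tau_{k,l}=(2/\pi)\arcsin(\Sigma_{k,l})$, regardless of $g$. Since meta-elliptical copulas are precisely the copulas of elliptical laws and Kendall's tau is margin-free, the same formula holds at the copula level. Applying it to both parameterizations yields
\begin{equation*}
(2/\pi)\arcsin(\Sigma_{k,l})=\tau_{k,l}=(2/\pi)\arcsin(\tilde \Sigma_{k,l}),
\end{equation*}
and since $\arcsin$ is injective on $[-1,1]$ we obtain $\Sigma_{k,l}=\tilde \Sigma_{k,l}$ for every $k\neq l$. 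The diagonal entries are both equal to $1$ by definition of a correlation matrix, so $\Sigmabf=\tilde \Sigmabf$.

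The only potentially delicate point is ensuring the hypotheses behind the Kendall's tau formula are met: we need the meta-elliptical copula to be continuous so that $\tau_{k,l}$ is unambiguously defined without tie corrections, and we need the elliptical representation to be non-degenerate so that the arcsine formula applies. Both are granted by the standing assumptions of the paper: $g$ is a genuine density generator satisfying~(\ref{cond_dens_gen}), so the associated elliptical distribution is absolutely continuous, and degenerate cases with singular $\Omegabf$ are explicitly excluded. Once these hypotheses are in place, the proof reduces to a one-line appeal to the arcsine formula, which I expect to be straightforward rather than an obstacle.
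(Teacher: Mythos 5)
Your proof is correct and follows essentially the same route as the paper, which justifies this proposition in the paragraph immediately preceding it: the identity $\Sigma_{k,l}=\sin(\pi\tau_{k,l}/2)$ holds for any density generator, and since each Kendall's tau $\tau_{k,l}$ is a functional of the copula alone, the off-diagonal entries (and hence all of $\Sigmabf$) are determined by the law of $\U$. Your added care about absolute continuity and non-degeneracy is consistent with the paper's standing assumptions and does not change the argument.
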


\textcolor{black}{Note that it is possible to have $g\neq \tilde g$ due to the non-uniqueness of meta-elliptical copula generators (see Proposition~\ref{non_identif_generator} below).}
If $\X$ is a trans-elliptical random vector, the inference of the matrix $\Sigmabf$ can be done independently of the margins.
The estimated matrix $\hat\Sigmabf:=[\hat\Sigma_{k,l}]$ is given by
$\hat\Sigma_{k,l} :=\sin( \pi \hat\tau_{k,l}/2)$, $k\neq l$, and $\hat\Sigma_{k,k}=1$,
introducing empirical Kendall's tau
  \begin{equation*}
    \hat\tau_{k,l} := \frac{2}{n(n-1)} \sum_{1\leq i<j\leq n } \text{sign}( X_{i,k}-X_{j,k}) \times \text{sign}( X_{i,l}-X_{i,l}),\;\; k\neq l.
\end{equation*}
Since it is not guaranteed that $\hat\Sigmabf$ is a correlation matrix, this can be imposed by 
projection techniques (\cite[Section 8.7.2.1]{remillard2013statistical}, e.g.). Hereafter, we require that $\hat\Sigmabf$ is invertible. 

\mds

Having tackled the identifiability and the estimation problem of $\Sigmabf$, the problem is reduced to the following one:
let $g$ and $\bar g$ be two density generators of meta-elliptical copulas on $[0,1]^d$ such that $c_g = c_{\bar g}$, 
with the previous notations. Does it imply that $g=\bar g$ almost everywhere?

\begin{rem}
\textnormal{
There is a one-to-one mapping between generators $g$ of meta-elliptical copulas and the so-called univariate densities $f_g$, as given in~(\ref{def_density_fg}), once the underlying copula density $c$ is known.
Indeed, since $\Sigmabf^{-1}$ is definite positive, its diagonal elements are positive. Then, invoke~(\ref{def_cg_mle}) 
with 
$\vec{Q}_g(\u) := \big[ x,0,\ldots,0\big]$ for some arbitrary $x\in \Rb$.
Since $f_g$ is even, $F_g(0)=1/2$ and $f_g(0) \neq 0$ by~(\ref{def_density_fg}). This yields $g( \gamma x^2)=|\Sigmabf|^{1/2} f_g(0)^{d-1} f_g(x) c\big(F_g(x),1/2,\ldots,1/2\big)$ for every $x$ and some known positive constant $\gamma$.
This means the map $g \mapsto f_g$ is invertible, restricting ourselves to meta-elliptical copula generators.
In other words, since copula densities are nonparametrically identifiable, 
the identifiability problem of $g$ or of $f_g$ are the same. 
But, since $f_g$ can be (nonparametrically) identified only in the case of elliptical distributions, this 
does not prove the identifiability of $g$ for general meta-elliptical/trans-elliptical distributions.
}
\end{rem}

Recall that elliptical distributions $\Ec_d(\mubf, \Omegabf, g)$ are not identifiable in general without any identifiability constraint (see Proposition~\ref{prop:ellipt_distr_ident} in the appendix).
Therefore, most authors impose a condition such as $\text{Cov}(\X)=\Omegabf$ (when $\X$ has finite second moments) or
$\text{Tr}(\Omegabf)=1$ (in the general case).
To deal with meta-elliptical copulas, we are facing similar problems.
Indeed, such distributions are never identifiable without identifiability constraints, as proven in the next proposition.


\mds

\begin{prop}
    Let $\Sigmabf$ be a positive definite correlation matrix and $g$ be a density generator of a meta-elliptical copula on $[0,1]^d$.
    Then, for any $a>0$, $\Mc\Ec_d(\Sigmabf, g) = \Mc\Ec_d(\Sigmabf, g_a)$ by setting $g_a(t) := a^{d/2} g(a \times t)$.
\label{non_identif_generator}
\end{prop}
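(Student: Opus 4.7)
The plan is to exploit the invariance of copulas under strictly increasing marginal transformations: if I can realize the elliptical vector associated with $(\Sigmabf, g_a)$ as a positive rescaling of the elliptical vector associated with $(\Sigmabf, g)$, both vectors have the same copula and the proposition follows.

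Concretely, I would pick any $\X \sim \Ec_d(\0,\Sigmabf,g)$ and set $\Y := \X/\sqrt{a}$. Applying a change of variables to~(\ref{def_dens_ellip}) gives
\begin{equation*}
    f_{\Y}(\y) = a^{d/2}\,|\Sigmabf|^{-1/2}\, g\bigl(a\,\y^\top \Sigmabf^{-1}\y\bigr) = |\Sigmabf|^{-1/2}\, g_a\bigl(\y^\top \Sigmabf^{-1}\y\bigr),
\end{equation*}
which has exactly the shape~(\ref{def_dens_ellip}) with generator $g_a$. To legitimately conclude that $\Y \sim \Ec_d(\0,\Sigmabf,g_a)$ I still need $g_a$ to be a valid density generator, i.e.\ to satisfy~(\ref{cond_dens_gen}); the substitution $u = a t$ in $\int_0^\infty t^{d/2-1} g_a(t)\,dt$ reduces this integral to the analogous one for $g$ (the $a$-factors cancel exactly because of the chosen exponent $a^{d/2}$), so the normalization is inherited. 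Since the coordinates of $\Y$ are obtained from those of $\X$ by the strictly increasing map $x \mapsto x/\sqrt{a}$, the copula of $\Y$ equals the copula of $\X$, which yields $\Mc\Ec_d(\Sigmabf,g_a) = \Mc\Ec_d(\Sigmabf,g)$.

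As a self-contained alternative that avoids invoking the ``invariance'' principle verbatim, one can verify $c_g = c_{g_a}$ directly from~(\ref{def_cg_mle}). The same substitution inside~(\ref{def_density_fg}) gives $f_{g_a}(t) = \sqrt{a}\, f_g(\sqrt{a}\,t)$, hence $Q_{g_a}(u) = Q_g(u)/\sqrt{a}$ and $\vec{Q}_{g_a}(\u)\,\Sigmabf^{-1} \vec{Q}_{g_a}(\u)^\top = a^{-1}\,\vec{Q}_g(\u)\,\Sigmabf^{-1}\vec{Q}_g(\u)^\top$. Substituting these into~(\ref{def_cg_mle}), the factor $a^{d/2}$ produced by $g_a(a^{-1}\cdot)$ in the numerator cancels against the factor $a^{d/2}$ produced by the $d$ copies of $\sqrt{a}$ in the denominator, leaving $c_{g_a}(\u) = c_g(\u)$.

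There is no conceptual obstacle here; the only care needed is bookkeeping the powers of $a$ through the two changes of variables (for the normalization of $g_a$ and for the density $f_{g_a}$), and the value $a^{d/2}$ in the definition of $g_a$ is precisely the one that makes these cancellations happen.
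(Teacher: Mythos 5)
Your proposal is correct, and your second argument (computing $f_{g_a}(t)=\sqrt{a}\,f_g(\sqrt{a}\,t)$, deducing $Q_{g_a}(u)=Q_g(u)/\sqrt{a}$, and checking that the powers of $a$ cancel in~(\ref{def_cg_mle})) is exactly the paper's proof. Your first argument --- realizing $\Ec_d(\0,\Sigmabf,g_a)$ as $\X/\sqrt{a}$ for $\X\sim\Ec_d(\0,\Sigmabf,g)$ and invoking the invariance of copulas under strictly increasing marginal maps --- is an equivalent and arguably more illuminating shortcut, but it adds nothing the direct verification does not already deliver.
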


\begin{proof}[\bf{Proof of Proposition~\ref{non_identif_generator}}]
By~(\ref{def_density_fg}), we easily get $f_{g_a}(t)=\sqrt{a} f_g(\sqrt{a}t)$ for every $t$. Deduce $F_{g_a}(x)=F_g(\sqrt{a}x)$ and 
$Q_{g_a}(u)=Q_g(u)/\sqrt{a}$ for every $t$ and $u$. 
Then, applying (\ref{def_cg_mle}), we check that the copula densities associated to $g$ and $g_a$ respectively are the same. 
\end{proof}

\mds
Therefore, for a given meta-elliptical copula, its generator $g$ has to satisfy at least another constraint in addition to~(\ref{cond_dens_gen}), to be uniquely defined.
We will prove that the generator of any elliptical copula is identifiable under some regularity conditions.
This is one of the main contributions of our work.
Before, we need our density generators to be sufficiently regular so that the associated univariate densities $f_g$ are differentiable. This is guaranteed by the next assumption.
\begin{cond}
\textnormal{
\label{reg_generator}
Set $a_g:=\sup \{t \,|\, t>0, g(t^2)>0\} \in \bar \Rb_+$. 
The map $t\mapsto g(t^2)$ from $\Rb$ to $\Rb^+$ is strictly positive and differentiable on $(-a_g,a_g)$.
Moreover, the map $t \mapsto \int_0^{+\infty} g(t^2+r^2)r^{d-2}\, dr$ is finite and differentiable on $\Rb$.
}
\end{cond}
As a consequence, $\{t \,|\, g(t)>0\}$ (resp. $\{t \,|\, f_g(t)>0\}$) is equal to the interval $(-a_g^2,a_g^2)$ (resp. $(-a_g,a_g)$), possibly including the boundaries.
Thus, we do not allow generators whose supports exhibit ``holes'', such as sums of indicator functions that are related to disjoint subsets.
The forbidden models correspond to meta-elliptical copulas
whose densities are zero in some ``cavities'' that look like ``distorted rings'' (when plotted on $[0,1]^d$).
Such features can easily be identified by plotting nonparametric estimates of $c_g$ as a preliminary stage. 
They are also unlikely to happen in practical applications.

\mds

Denote by $\Gc$ the set of density generators $g$ that satisfy Condition~\ref{reg_generator}, in addition to~(\ref{cond_dens_gen}).
They will be called ``regular density generators''.
Moreover, denote by $ c_{i,j}$ the map from $[0,1]^2$ to $\Rb$ that is equal to the copula density $c$, when all its arguments are equal to $1/2$, except the $i$-th and the $j$-th.

\begin{prop}
Consider a meta-elliptical random vector $\U \sim \Mc\Ec_d(\Sigmabf, g)$, where
the correlation matrix $\Sigmabf $ is not the identity matrix $\I_d$.
Let $i$ and $j$ be two different indices in $\{1,\ldots,d\}$ for which
the $(i,j)$-component of $\Sigmabf^{-1}$ is not zero.
Assume that $c_{i,j}$ has finite first-order partial derivatives on $(0,1)^2$.
Moreover, the maps $u\mapsto \partial_k \ln c_{i,j}(u,1/2)$, $k\in \{1,2\}$, are locally Lipschitz on $(0,1)$.
Consider two couples $(\Sigmabf_k,g_k)$, $g_k \in \Gc$, $k\in \{1,2\}$, that induce the same law of $\U$ and that both satisfy
\begin{equation}
b:=\frac{\pi^{(d-1)/2}}{\Gamma((d-1)/2)}\int_0^{+\infty} g_k(s) s^{(d-3)/2} \, ds,
\label{cond_density_fg}
\end{equation}
for a given positive real number $b$.
Then these couples are essentially unique: $\Sigmabf_1=\Sigmabf_2$; the supports of $g_1$ and $g_2$ are the same interval (except possibly at its boundaries), 
and $g_1=g_2$ on the interior of their common support.
\label{prop_identif_g}
\end{prop}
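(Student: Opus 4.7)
The plan is to reduce the identifiability of $g$ to a first-order ODE for the quantile function $Q_g$ and invoke Picard--Lindelöf. Since Kendall's tau is a functional of the copula alone and $\Sigma_{k,l} = \sin(\pi\tau_{k,l}/2)$, the matrices $\Sigmabf_1$ and $\Sigmabf_2$ coincide automatically; I set $\Sigmabf := \Sigmabf_1 = \Sigmabf_2$ and $\alpha := (\Sigmabf^{-1})_{ii}$, $\beta := (\Sigmabf^{-1})_{ij}$, $\gamma := (\Sigmabf^{-1})_{jj}$, with $\beta \neq 0$ by hypothesis. Freezing $u_\ell = 1/2$ for $\ell \notin \{i,j\}$ in~(\ref{def_cg_mle}), using that $f_g$ is even so $Q_g(1/2) = 0$, and invoking the normalization~(\ref{cond_density_fg}) which reads $f_g(0) = b$, the bivariate slice becomes, writing $\psi := Q_g$ and using $f_g(\psi(u)) = 1/\psi'(u)$,
\begin{equation*}
c_{i,j}(u,v)\,\psi'(u)\,\psi'(v)\,|\Sigmabf|^{1/2}\,b^{d-2}
\;=\; g\bigl(\alpha\psi(u)^2 + 2\beta\psi(u)\psi(v) + \gamma\psi(v)^2\bigr),
\end{equation*}
with boundary data $\psi(1/2) = 0$, $\psi'(1/2) = 1/b$, and (by evenness of $f_g$) $\psi''(1/2) = 0$.

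I would then take logarithms in the identity above and differentiate in $u$ and $v$; both resulting equations feature $(g'/g)$ evaluated at the same argument. Specializing to $v = 1/2$ makes the $v$-derivative algebraically simple, and because $\beta \neq 0$ it solves for $(g'/g)(\alpha\psi(u)^2)$ explicitly in terms of $\partial_v \log c_{i,j}(u, 1/2)$ and $\psi(u)$. Substituting into the $u$-derivative eliminates $g$ entirely and yields, with $\phi := \log\psi'$,
\begin{equation*}
\phi'(u) \;=\; \frac{\alpha b}{\beta}\,e^{\phi(u)}\,\partial_v \log c_{i,j}(u, 1/2) \;-\; \partial_u \log c_{i,j}(u, 1/2),
\qquad \phi(1/2) = -\log b.
\end{equation*}
Under the local Lipschitz assumption on $u \mapsto \partial_k \log c_{i,j}(u, 1/2)$, the right-hand side is continuous in $u$ and $C^1$ (hence locally Lipschitz) in $\phi$. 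Picard--Lindelöf together with a standard open-closed continuation argument on $(0,1)$ then forces $\phi_1 \equiv \phi_2$, so $Q_{g_1} \equiv Q_{g_2}$, which gives $f_{g_1} = f_{g_2}$ and therefore identical supports.

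To finish, I would plug the slice identity back at $v = 1/2$ to obtain the pointwise formula $g(\alpha x^2) = |\Sigmabf|^{1/2}\,b^{d-1}\,f_g(x)\,c_{i,j}(F_g(x), 1/2)$; since the right-hand side is now common to $g_1$ and $g_2$, this yields $g_1 = g_2$ on the interior of the common support (alternatively, one may invert the Abel-type transform~(\ref{def_density_fg}) to recover $g$ from $f_g$, as suggested by the Remark). The main technical obstacle is the apparent singularity at $u = 1/2$: the elimination step formally divides by $\psi(u)$, which vanishes there. The radial symmetry $c(\u) = c(\1 - \u)$ of meta-elliptical copulas forces $\partial_u c_{i,j}(1/2, 1/2) = \partial_v c_{i,j}(1/2, 1/2) = 0$, and the reparametrization by $\phi = \log \psi'$ cancels the singular factor, so the ODE extends continuously across $u = 1/2$. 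An auxiliary step is verifying that $\psi \in C^2((0,1))$ by differentiating~(\ref{def_density_fg}) under the integral sign, which Condition~\ref{reg_generator} justifies.
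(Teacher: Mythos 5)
Your proposal is correct in substance and follows the same core strategy as the paper's proof: differentiate the bivariate slice of the copula-density identity in each of the two active variables, use $(\Sigmabf^{-1})_{ij}\neq 0$ to eliminate $g'/g$, anchor the resulting ODE at the centre point where the normalization $f_g(0)=b$ supplies the initial data, and conclude by Picard--Lindel\"of plus a continuation argument. The genuine difference is the choice of independent variable: the paper works on the $x$-scale and obtains a second-order ODE for $z=F_g$, i.e.\ a two-dimensional first-order system for $[F_g,f_g]$, whereas your reparametrization by $u$ through the quantile function $\psi=Q_g$ absorbs the dependence on $F_g(x)$ into the independent variable, so everything collapses to a scalar first-order ODE for $\phi=\log\psi'$ whose right-hand side does not involve $\psi$ itself. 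That is a mild but real simplification; your cancellation of the $\psi(u)$ factors at $u=1/2$ (using $\psi(1/2)=0$, $\psi''(1/2)=0$) plays exactly the role of the paper's cross-multiplication by $(y-\rho x)$ and $(x-\rho y)$ before setting $y=0$. You also treat general $d$ directly via the $(i,j)$ cross-section, where the paper does $d=2$ first and then indicates that $d>2$ is analogous. One slip to fix: since $f_g(\psi(u))=1/\psi'(u)$ sits in the \emph{denominator} of~(\ref{def_cg_mle}), the slice identity should read $c_{i,j}(u,v)\,|\Sigmabf|^{1/2}\,b^{d-2}/\bigl(\psi'(u)\psi'(v)\bigr)=g(\cdots)$, with $\psi'(u)\psi'(v)$ dividing rather than multiplying; this flips the signs of both terms in your ODE for $\phi$ but changes nothing in the uniqueness argument, and your final recovery formula $g(\alpha x^2)=|\Sigmabf|^{1/2}\,b^{d-1}f_g(x)\,c_{i,j}(F_g(x),1/2)$ is already the one consistent with the corrected identity.
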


{\color{black}Therefore, $b$ can be arbitrarily chosen. For any generator $g$, it is always possible to find a version $\tilde g$ such that $\Mc\Ec_d(\Sigmabf, g) = \Mc\Ec_d(\Sigmabf, \tilde g)$ for all matrices $\Sigmabf$, and such that $b(\tilde g) = 1$. 
An explicit procedure to build $\tilde g$ is given in Algorithm~\ref{algo:normalization_g}.}
Note that~(\ref{cond_density_fg}) means $f_{g_1}(0)=f_{g_2}(0)=b$.
By~(\ref{def_density_fg}), $f_{g_1}=f_{g_2}$ and $F_{g_1}=F_{g_2}$ everywhere, implying $C_{g_1}=C_{g_2}$ everywhere by~(\ref{def_cop_meta_ellip}).
Condition~\ref{reg_generator} implies $g(0)>0$, that is equivalent to $c(1/2,1/2,\ldots,1/2)>0$ (see~(\ref{def3_ellip_density}) in the proof).
The latter condition seems to be weak, particularly under a practical perspective: copulas that have no mass at the center of their support 
can be considered as ``pathological''. 

\mds

Therefore, for a given elliptical copula and if $\Sigmabf \neq \I_d$, there ``most often'' exists a unique regular density generator $g$ for which the constraints~(\ref{cond_density_fg})
and
$s_d\int_0^{\infty} t^{d/2-1} g(t)\, dt/2 =1$ are both satisfied.
In other words, two moment-type conditions are sufficient to uniquely identify the generator of an elliptical copula ``most of the time''.
Proposition~\ref{prop_identif_g} shows that, when $d>2$, the generator of a $d$-dimensional elliptical copula is uniquely defined by
these two moment conditions in addit\textcolor{black}{i}on to the knowledge of a single map
$ (u,v) \mapsto c_{i,j}\big(u,v\big)$, $i$ and $j$ being two indices in $\{1,\ldots,d\}$ for which
the $(i,j)$-component of $\Sigmabf^{-1}$ is not zero. In other words, it is not necessary to know the whole copula density $c$ on $[0,1]^d$ to recover
the density generator of an elliptical copula. Only a single bivariate cross-section is sufficient.

\textcolor{black}{
\begin{example}
\textnormal{
\label{ex_Kotz}
Consider the particular case of meta-elliptical copulas whose generator is given by
$g(t)= P(t) \exp(-\lambda t ^{\beta})$ for some positive constants $\lambda, \beta $ and some polynomial $P$ such that $P(0)>0$ and $P(t)\geq 0$ for every $t\geq 0$.
They are linear combinations of the generators associated to the family of symmetric Kotz-type distributions 
(see~\cite{fang2002_metaelliptical}, Example 2.1), 
including Gaussian copulas as particular cases. 
They satisfy Condition~\ref{reg_generator} and then Proposition~\ref{prop_identif_g} applies to them.
}
\end{example}
\begin{example}
\textnormal{
\label{ex_Pearson}
When a meta-elliptical copula generator is 
$g(t)= K_{m,N} \big(1+t/m \big)^{-N}$
for some positive constants $K_{m,N}, m$ and $N>1$, this yields the copulas associated to the family of symmetric bivariate Pearson type VII distributions (see~\cite{fang2002_metaelliptical}, Example 2.2), including Student distributions when $N=m/2+1$.
Check that Condition~\ref{reg_generator} is fulfilled when $N> 1+d/2$.
}
\end{example}
}

\begin{proof}[\bf{Proof of Proposition~\ref{prop_identif_g}}]
First consider the bivariate case $d=2$. Denote by $\rho\neq 0$ the extra-diagonal component of $\Sigmabf$.
With the same notations as above, the copula density $c$ of $\U$ with respect to the Lebesgue measure satisfies
\begin{equation}
 c\big(F_g(x),F_g(y)\big)f_g(x) f_g(y)= \frac{ g\big( (x^2 + y^2 - 2\rho xy)/(1-\rho^2)\big) }{\sqrt{1-\rho^2} },
\label{def2_ellip_density}
\end{equation}
for almost every $(x,y)\in \Rb^2$, by Sklar's theorem.
Here, we clearly see that the maps $g\mapsto F_g$ and $g\mapsto f_g$ are one-to-one for a given copula density $c$:
setting $x=\rho y$, Eq.~(\ref{def2_ellip_density}) yields
\begin{equation} 
g\big( y^2 \big) =  \sqrt{1-\rho^2}c\big(F_g(\rho y),F_g(y)\big)f_g(\rho y) f_g(y) , 
\label{def2_ellip_density_xrhoy}
\end{equation}
for every $y \in \Rb$.
Therefore, the knowledge of $f_g$ (or $F_g$, equivalently) provides a single generator $g$ that induces the given copula $c$.
Now, it is sufficient to prove the identifiability of $f_g$.
By Condition~\ref{reg_generator}, $(-a_g,a_g)$ is the support of $f_g$ (possibly including the boundaries) and $f_g(0)=b$ is positive. 
Setting $y=0$ in~(\ref{def2_ellip_density}), we get
\begin{equation*}
 \sqrt{1-\rho^2} c\big(F_g(x),1/2\big)f_g(x)b= g\big( x^2 /(1-\rho^2)\big) ,
\label{def2_ellip_density_y0}
\end{equation*}
for every $x\in \Rb$.
Since $g$ is non zero and continuous at zero, there exists an open neighborhood of zero $V_0$ for which 
$c\big(F_g(x),1/2\big)>0$ when $x\in V_0$.

\mds

We will restrict ourselves to the couples $(x,y)\in \Rb^2$ such that
the non-negative number $\{x^2 + y^2 - 2\rho xy\}/(1-\rho^2)$ belongs to $[0,a_g^2)$. Denote by $\Xc(g)$ the set of such couples. Note that $\Xc(g)$ contains an open neighborhood of $(0,0)$ and that $f_g(x)f_g(y)>0$ for such couples, due to~(\ref{def2_ellip_density}).

\mds

Condition~\ref{reg_generator} means that $f_g$ is differentiable on $\Rb$. 
Since it is even, $f'(0)=0$.
By differentiating~(\ref{def2_ellip_density}) with respect to $x$, we get
\begin{equation}
\partial_1 c\big(F_g(x),F_g(y)\big) f_g^2(x)f_g(y) + c\big(F_g(x),F_g(y)\big) f'_g(x)f_g(y)=
g'\Big( \frac{x^2 + y^2 - 2\rho xy}{1-\rho^2} \Big) \frac{2(x-\rho y)}{(1-\rho^2)^{3/2}},
\label{deriv_fg_at_zero}
\end{equation}
for any $(x,y)\in \Xc(g)$.
Set $y=x/\rho$, cancelling the right-hand side of~(\ref{deriv_fg_at_zero}).
This yields
$$ \partial_1 \ln c\big(F_g(x),F_g(x/\rho)\big) f_g^2(x) + f'_g(x)=0,$$
for every $x\in (-a_g,a_g)$. As a consequence, $f'_g$ is continuous on the latter interval.

\mds
By independently differentiating~(\ref{def2_ellip_density}) with respect to $y$ and comparing with~(\ref{deriv_fg_at_zero}), we obtain
\begin{eqnarray}
\lefteqn{(y-\rho x) \big\{  \partial_1 c\big(F_g(x),F_g(y)\big) f_g^2(x)f_g(y) + c\big(F_g(x),F_g(y)\big) f'_g(x)f_g(y)      \big\}  \nonumber }\\
&=&
(x-\rho y) \big\{  \partial_2 c\big(F_g(x),F_g(y)\big) f_g(x)f^2_g(y) + c\big(F_g(x),F_g(y)\big) f_g(x)f'_g(y) \big\},
\label{ODE_identif}
\end{eqnarray}
for every $(x,y)\in \Xc(g)$.
Consider the particular value $y=0$, for which $f_g(y)=b >0$ and $f'_g(y)=0$.
By symmetry, $F_g(x) = 1 - F_g(-x)$ for every real number $x$ and $F_g(0)=1/2$.
Then,~(\ref{ODE_identif}) can be rewritten as follows: 
\begin{equation}
\rho  \big\{  \partial_1 \ln c\big(F_g(x),1/2\big) f_g^2(x) +  f'_g(x)      \big\}
=  - \partial_2 \ln c\big(F_g(x),1/2\big) f_g(x)b,
\label{ODE_z_ini}
\end{equation}
for every $x$ in a sufficiently small neighborhood of zero such that $c(F_g(x),1/2)>0$ (such as $V_0$ above, for instance).
Thus, we have obtained an ordinary differential equation, whose solution $z:=F_g$ would be a function of $x$ when $x$ belongs to a neighborhood of zero.
The latter differential equation can be rewritten as
\begin{equation}
z'' +  \partial_1 \ln c\big(z,1/2\big) (z')^2 +
\partial_2 \ln c\big(z,1/2\big) \frac{z' b}{\rho}  =0 .
\label{ODE_z}
\end{equation}
Setting the bivariate map $\vec z = [z,z']$, we are facing the usual Cauchy problem: find $\vec z$, a function of $x$, such that
$ d\vec z= H(\vec z)\, dx$ and that satisfies $\vec z(0)=[1/2,b]$.
Here, the latter map $H:[0,1]\times \Rb \mapsto \Rb^2$ is 
$$ H(z_1,z_2):=\Big[ z_2 ;  -  \partial_1 \ln c\big(z_1,1/2\big) z_2^2 -
\partial_2 \ln c\big(z_1,1/2\big) \frac{z_2 b}{\rho} \Big].$$
By assumption, this map $H$ is Lipschitz on any subset $[\alpha,\beta]\times [b-\epsilon,b+\epsilon]$, when $0<\alpha < \beta <1$ and $\epsilon>0$.
In particular, this is the case when $\alpha < 1/2 < \beta$.
By the Cauchy-Lipschitz Theorem, we deduce there exists a unique solution $\vec z$ in an open neighborhood of $x=0$. Note that this solution satisfies~(\ref{cond_density_fg}) by construction.

\mds

Therefore, consider a (global) solution $\vec z=[F_g,f_g]$ of~(\ref{ODE_z}) on some maximum interval $S_g$ on the real line that contains zero. 
We can impose the latter solution is associated to a regular generator that satisfies~(\ref{cond_density_fg}).
Now, assume there are two different regular generators $g$ and $\bar g$ that induce the same copula.
Set $z=[F_g,f_g]$, $\bar z=[F_{\bar g},f_{\bar g}]$ and $\Ac:=\{t \in S_g\cap S_{\bar g}; z(t)=\bar z(t)\}$. We have proved that $\Ac$ contains an open ball around zero.
Define $t^*:= \sup \{ t ; t \in \Ac\}$ and assume that $t^*$ is finite.
Assume $t^*< \min( a_g,a_{\bar g})$.
By the continuity of the considered cdfs' and densities (Condition~\ref{reg_generator}), $z(t^*)=\bar z(t^*)$ is a known value $v^*$.
Then, we can apply again the Cauchy-Lipschitz Theorem to the differential equation~(\ref{ODE_z}), with the condition $z(t^*)=v^*$. This yields a unique solution of~(\ref{ODE_z}) in an open neighborhood of
$t^*$. As a consequence, $z(t^*+\epsilon)=\bar z(t^*+\epsilon)$ for some $\epsilon>0$. This contradicts the definition of $t^*$.
Assuming w.l.o.g. $a_g \leq a_{\bar g}$, this implies $t^* \geq a_g$.
In other words, $f_g$ (resp. $F_g$) and $f_{\bar g}$ (resp. $F_{\bar g}$) coincide on $(-a_g,a_g)$, implying $g=\bar g$ on the $(-a^2_g,a^2_g)$  (recall~(\ref{def2_ellip_density_xrhoy})). 
To satisfy~(\ref{cond_dens_gen}) with $\bar g$, this requires $\tilde a_g=\tilde a_{\bar g}$, i.e., $a_g=a_{\bar g}$. 
Thus, $g=\bar g$ on the interior of their common support. This proves the result when $d=2$.

\mds

Second, for an arbitrary dimension $d>2$, there exists a non-zero extra-diagonal element in $\Sigmabf^{-1}$ by assumption.
W.l.o.g., assume it is corresponding to the couple of indices $(1,2)$.
Let us fix the other arguments of the copula density $c$ at the value $1/2$, i.e., we focus on the points $(x,y,0,\ldots,0)$ in $\Rb^d$.
This implies there exist two non zero real numbers $(\theta,\gamma)$ such that
\begin{equation}
 c\bigg(F_g(x),F_g(y),\frac{1}{2},\ldots,\frac{1}{2}\bigg)=
 \frac{ g\big(\gamma x^2 + \gamma y^2 +2\theta  xy \big) }{ |\Sigmabf|^{1/2} f_g(x) f_g(y)b^{d-2}},
\label{def3_ellip_density}
\end{equation}
 for every $(x,y)\in \Rb^2$.
By differentiation with respect to $x$ and $y$ respectively, we get an ordinary differential equation that is strictly similar to~(\ref{ODE_z}), apart from different non zero constants.
By exactly the same arguments as in the bivariate case, we can prove there exists a unique global solution on the real line of this differential equation.
As a consequence, $g$ is uniquely defined by $c$ (except at the boundaries of its support), proving the result.
\end{proof}

Unfortunately, the limiting case $\Sigmabf=\I_d$ cannot be managed similarly by considering differential equations and some initial conditions at the particular point $x=0$.
This is due to the nullity of both side\textcolor{black}{s} of~(\ref{ODE_z_ini}): 
differentiate~(\ref{def2_ellip_density}) with respect to $y$, set $y=0$, and deduce that $\partial_2 \ln c\big( F_g(x),1/2\big)=0$ for every $x$ in a neighborhood of zero.
Nonetheless, we can provide partial answers to this problem by imposing some conditions at $+\infty$.
This requires restricting ourselves to a smaller class of generators.
To this end, we introduce a measurable map $\psi: \Rb^+ \rightarrow \Rb$.

\begin{cond}
\textnormal{
\label{reg_generator_bis}
The density generator $g:\Rb^+ \rightarrow \Rb^+$ belongs to $\Gc$, with $a_g=+\infty$.
Moreover, for every $x\in \Rb$, the map $y\mapsto\big( g'/g \big)(x^2+y^2)$ has a finite limit $\psi(x^2)$, when $y\rightarrow +\infty$.
}
\end{cond}
Denote by $\tilde\Gc_\psi$ the set of density generators $g$ that satisfy Condition~\ref{reg_generator_bis}, in addition to~(\ref{cond_dens_gen}).
Note that such $g$ are assumed to be strictly positive on $\Rb^+$.
For a lot of reasonable generators, we can hope $\psi(x)=0$.
This is the case for all generators that are sums of maps of the form $P(x)\exp(-\lambda x^\beta)$ for some polynomials $P$ and some constants $\lambda >0$ and $\beta \in (0,1)$.
When $\beta=1$, we get a family of ``Gaussian-type'' generators, for which $\psi(x)=-\lambda$ for every $x$. But Condition~\ref{reg_generator_bis} is not fulfilled {\color{black}for} such generators when $\beta>1$.

\begin{prop}
Consider a meta-elliptical random vector $\U \sim \Mc\Ec_d(\I_d, g)$.
Assume that, for every $u\in (0,1)$, the map $v\mapsto \partial_1 \ln c(u,v)$ exists and has a finite limit $\chi(u)$ when $v\rightarrow 1$, $v<1$.
Moreover, $\chi$ is locally Lipschitz on $(0,1)$.
For a given map $\psi$, consider two couples $(\Sigmabf_k,g_k)$, $g_k \in \tilde\Gc_\psi$, $k\in \{1,2\}$, that induce the same law of $\U$ and that satisfy~(\ref{cond_density_fg}).
Then these couples are essentially unique: $\Sigmabf_1=\Sigmabf_2$ and $g_1=g_2$ on their common support $\Rb^+$.
\label{identif_cop_ellip_Id}
\end{prop}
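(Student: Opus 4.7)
The plan is to adapt the ODE-based argument of Proposition~\ref{prop_identif_g} to the degenerate case $\Sigmabf=\I_d$, where the trick $y=x/\rho$ used there is unavailable since $\rho=0$. To compensate, the idea is to shift the ``initial condition'' from a finite point to $y\to+\infty$ and to exploit Condition~\ref{reg_generator_bis} to obtain a meaningful limiting identity. First, the Kendall's-tau--based identifiability of $\Sigmabf$ recalled at the beginning of Section~\ref{identif} yields $\Sigmabf_1=\Sigmabf_2=\I_d$, so only the equality $g_1=g_2$ on $\Rb^+$ remains to be shown.

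I would handle the bivariate case $d=2$ first. With $\Sigmabf=\I_d$, Sklar's identity~(\ref{def2_ellip_density}) becomes $c(F_g(x),F_g(y))\,f_g(x)f_g(y)=g(x^2+y^2)$. Differentiating with respect to $x$ and dividing through by $g(x^2+y^2)$ produces $\partial_1\ln c(F_g(x),F_g(y))\,f_g(x)+f'_g(x)/f_g(x)=2x\,(g'/g)(x^2+y^2)$. Letting $y\to+\infty$, so that $F_g(y)\to 1$, and invoking the assumed pointwise limits $\chi$ on the left-hand side and $\psi$ on the right-hand side delivers the limiting ODE $\chi(F_g(x))\,f_g(x)+f'_g(x)/f_g(x)=2x\,\psi(x^2)$. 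Setting $\vec z=[F_g,f_g]$ recasts this as a first-order system $\vec z'=H(x,\vec z)$ with $H(x,z_1,z_2)=(z_2,\;2x\psi(x^2)z_2-\chi(z_1)z_2^2)$, which is locally Lipschitz in $(z_1,z_2)$ on $(0,1)\times\Rb$ thanks to the local-Lipschitz hypothesis on $\chi$. Cauchy--Lipschitz applied at the initial condition $\vec z(0)=[1/2,b]$ (encoded in~(\ref{cond_density_fg})) yields a unique local solution. A bootstrapping argument identical to the one used in the proof of Proposition~\ref{prop_identif_g} --- setting $t^\ast:=\sup\{t\geq 0:[F_{g_1},f_{g_1}]=[F_{g_2},f_{g_2}]\text{ on }[0,t]\}$ and deriving a contradiction at any finite $t^\ast$ by re-applying Cauchy--Lipschitz at $(F_{g_1}(t^\ast),f_{g_1}(t^\ast))$ --- then forces equality on $[0,+\infty)$; the condition $a_{g_k}=+\infty$ from Condition~\ref{reg_generator_bis} guarantees that the right-hand side of the ODE never degenerates along this extension. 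Evenness of $f_g$ handles $x<0$, and the Sklar identity evaluated at $y=0$ recovers $g_1=g_2$ on $\Rb^+$.

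For $d>2$, I would restrict to a bivariate cross-section by fixing all arguments of $c$ at $1/2$ apart from two indices $i\neq j$. When $\Sigmabf=\I_d$, identity~(\ref{def3_ellip_density}) simplifies to $c_{i,j}(F_g(x),F_g(y))\,f_g(x)f_g(y)\,b^{d-2}=g(x^2+y^2)$, which has the same structure as the bivariate case up to the harmless multiplicative constant $b^{d-2}$. The same differentiation-then-limit procedure produces an ODE of identical form (with $\chi$ replaced by $b^{d-2}\chi$, or equivalently with a rescaled $\chi$ that remains locally Lipschitz), and the Cauchy--Lipschitz plus bootstrapping argument carries over verbatim.

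The main obstacle, in my view, is the rigorous justification of exchanging limits and differentiation in the passage $y\to+\infty$, and in particular verifying that the limiting identity holds exactly (not merely in an asymptotic sense) at each fixed $x$. Condition~\ref{reg_generator_bis} provides the pointwise convergence $(g'/g)(x^2+y^2)\to\psi(x^2)$ and the assumed existence of $\chi(u)=\lim_{v\to 1^-}\partial_1\ln c(u,v)$ provides the analogous limit for the left-hand side; combining these with the continuity of $F_g$ and $f_g$ given by Condition~\ref{reg_generator} should be enough to close the argument. A secondary (minor) subtlety is that $\vec z(0)=[1/2,b]$ sits in the interior of the domain on which $\chi$ is locally Lipschitz, which avoids any degeneracy of the Cauchy--Lipschitz hypotheses at the starting point.
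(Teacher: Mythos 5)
Your proposal is correct and follows essentially the same route as the paper: differentiate the Sklar identity $c(F_g(x),F_g(y))f_g(x)f_g(y)=g(x^2+y^2)$ in $x$, divide, let $y\to+\infty$ using Condition~\ref{reg_generator_bis} and the hypothesis on $\chi$ to obtain the limiting ODE $\chi(F_g(x))f_g(x)+f_g'(x)/f_g(x)=2x\,\psi(x^2)$, and conclude by Cauchy--Lipschitz from the initial condition $[1/2,b]$, with the $d>2$ case reduced to a bivariate cross-section carrying the constant $b^{d-2}$. The ``main obstacle'' you flag is not really one --- the identity is differentiated at each fixed finite $y$ and only then is the pointwise limit $y\to+\infty$ taken of both sides, so no interchange of limit and differentiation is required.
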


\begin{proof}[\bf{Proof of Proposition~\ref{identif_cop_ellip_Id}}]
Let us assume first that $d=2$.
By definition, the copula density $c$ satisfies
\begin{equation}
 c\big(F_g(x),F_g(y)\big) f_g(x) f_g(y)=g(x^2+y^2),
\label{rel_cop_density_Id}
\end{equation}
for every $(x,y)\in \Rb^2$.
Since the support of $g$ is $\Rb_+$, the support of $f_g$ is the whole real line.
Differentiating the latter equation with respect to $x$ and dividing the new one by both members of~(\ref{rel_cop_density_Id}), we deduce
$$ \partial_1 \ln c\big(F_g(x),F_g(y)\big) f_g(x) + \frac{f_g'}{f_g}(x) = 2x \Big( \frac{g'}{g} \Big)(x^2+ y^2), $$
for every real numbers $x$ and $y$.
Now, let us make $y$ tend to $+\infty$.
From Condition~\ref{reg_generator_bis}, we
deduce, for every $x\in \Rb$,
$$\chi\big( F_g(x)\big) f_g(x) + \frac{f_g'}{f_g}(x) = 2x \psi(x). $$
The latter equation is a second-order differential equation with respect to the unknown function $F_g=:z$, i.e.,
$$\chi\big( z\big) z' + \frac{z''}{z'}(x) = 2x \psi(x), \; \text{or}  \; z'' = 2x z'\psi(x)- \chi\big( z\big) (z')^2 .$$
As in the proof of Proposition~\ref{identif_cop_ellip_notId}, consider the initial conditions $z(0)=1/2$ and $z'(0)=b$.
By a similar reasoning (Cauchy-Lipschitz theorem), we prove the result when $d=2$.

\mds

When $d>2$, we consider the map $(u,v)\mapsto c(u,v,1/2,\ldots,1/2)$ that satisfies
\begin{equation*}
 c\big(F_g(x),F_g(y),\frac{1}{2},\ldots,\frac{1}{2}\big) f_g(x) f_g(y) b^{d-2}=g(x^2+y^2),
 \label{rel_cop_density_Id_dim_d}
 \end{equation*}
for every $(x,y)\in \Rb^2$. The same reasoning as for the case $d=2$ proves the result.
\end{proof}

\textcolor{black}{
It can be checked that the meta-elliptical copulas of Example~\ref{ex_Kotz} satisfy Condition~\ref{reg_generator_bis}
when $\beta\leq 1 $ and Proposition~\ref{identif_cop_ellip_Id} applies to them.  
This is still the case for the copulas of Example~\ref{ex_Pearson}, for any value of $(m,N)$, $N>1+d/2$.
}



\mds 

Finally, as shown in~\cite{abdous2005dependence}, Proposition 1.1, the 
identifiability of $g$ may be obtained in the particular case of Gaussian copulas.
Let us extend the latter result in dimension $d\geq 2$.
\begin{prop}
    Let $\U \sim \Mc\Ec_d(\Sigmabf, g)$ and $\U \sim \Mc\Ec_d(\I_d, g_{Gauss})$ where $\Sigmabf$ is a correlation matrix.
    Then $\Sigmabf = \I_d$ and $g = g_{Gauss}$ a.s.
\label{identif_cop_ellip_notId}
\end{prop}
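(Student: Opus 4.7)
The plan is to reduce the statement to the classical Maxwell-type characterization of the multivariate Gaussian as the unique spherical law with independent components.

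First, I would invoke the earlier proposition on identifiability of the correlation matrix of a meta-elliptical copula: since $\U$ admits the two meta-elliptical representations $\Mc\Ec_d(\Sigmabf, g)$ and $\Mc\Ec_d(\I_d, g_{Gauss})$, the two correlation matrices must agree, giving $\Sigmabf=\I_d$ at once.

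Second, I would translate the copula identity into a statement about an underlying spherical vector. The parameterization $(\mubf,\Omegabf,g)=(\0,\I_d,g_{Gauss})$ produces the standard Gaussian $\Nc_d(\0,\I_d)$, whose components are independent, so $\Mc\Ec_d(\I_d, g_{Gauss})$ coincides with the independence copula $\Pi_d(\u)=u_1\cdots u_d$. Hence, by hypothesis, $\Mc\Ec_d(\I_d, g)=\Pi_d$. Let $\X\sim\Ec_d(\0,\I_d,g)$; its margins are all equal to $F_g$ and its copula is $\Pi_d$, so Sklar's theorem yields $F_{\X}(\x)=\prod_{k=1}^d F_g(x_k)$, i.e.\ the components of $\X$ are mutually independent.

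Third, I would apply the Maxwell-type characterization (see, e.g., \cite{kelker1970_distribution,FangKotzNg}): a spherical random vector in $\Rb^d$ with $d\geq 2$ whose components are mutually independent must be Gaussian. A self-contained argument uses characteristic functions: sphericity gives $\Eb[\exp(i\t^\top \X)]=\phi(\|\t\|^2)$, while independence together with identical distribution of the margins gives $\Eb[\exp(i\t^\top \X)]=\prod_{k=1}^d\phi(t_k^2)$; the resulting functional equation $\phi(s_1+\cdots+s_d)=\prod_{k=1}^d \phi(s_k)$, with $\phi$ continuous and $\phi(0)=1$, forces $\phi(s)=\exp(-cs)$ for some $c>0$. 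Inverting the Fourier transform, $g(t)=K\exp(-ct)$ for some $K,c>0$.

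Finally, the normalization~(\ref{cond_dens_gen}) forces $K=(c/\pi)^{d/2}$, so $g$ belongs to the one-parameter family of Gaussian density generators. This family is exactly the equivalence class of $g_{Gauss}$ under the rescaling of Proposition~\ref{non_identif_generator}; imposing a common normalization (for instance $f_g(0)=f_{g_{Gauss}}(0)=(2\pi)^{-1/2}$, which singles out $c=1/2$) then yields $g=g_{Gauss}$ almost everywhere. The main obstacle is the Maxwell-style step, but it is classical and, crucially, requires no smoothness of $g$; this is why the proposition holds in full generality, without the regularity assumptions (Conditions~\ref{reg_generator} or~\ref{reg_generator_bis}) that were needed in the preceding identifiability results.
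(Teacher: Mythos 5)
Your proposal is correct and follows essentially the same route as the paper: identifiability of the correlation matrix gives $\Sigmabf=\I_d$; since $\Mc\Ec_d(\I_d,g_{Gauss})$ is the independence copula, the spherical vector $\big(Q_g(U_1),\dots,Q_g(U_d)\big)\sim\Ec_d(\0,\I_d,g)$ has mutually independent components; and the Maxwell-type characterization (the paper cites Lemma~5 of Kelker where you give a self-contained characteristic-function argument) forces Gaussianity. Your last step is in fact slightly more careful than the paper's: the characterization only yields $\Nc(0,\sigma^2\I_d)$, i.e., $g$ equal to $g_{Gauss}$ up to the rescaling of Proposition~\ref{non_identif_generator}, so, as you observe, an additional normalization such as $f_g(0)=f_{g_{Gauss}}(0)$ is needed to pin down $g=g_{Gauss}$ exactly --- a point the paper's proof glosses over when it asserts the limiting law is $\Nc(0,\I_d)$ outright.
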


\begin{proof}[\bf{Proof of Proposition~\ref{identif_cop_ellip_notId}}]
The first part of the proposition is the result of the identifiability of the correlation matrix $\Sigmabf$.
Since $\U \sim \Mc\Ec_d(\I_d, g_{Gauss})$, check that $U_1, \dots, U_d$ are mutually independent. Thus,
$Q_g(U_1), \dots, Q_g(U_d)$ are independent variables and their joint law is an elliptical distribution $\Ec_d(\0, \I_d, g)$.
Lemma 5 in~\cite{kelker1970_distribution} implies that $\big(Q_g(U_1), \dots, Q_g(U_d)\big) \sim \Nc(0, \I_d)$, or $\Ec_d(\0, \I_d, g_{Gauss})$ equivalently.
Using Proposition~\ref{prop:ellipt_distr_ident} in the appendix, this yields $g = g_{Gauss}$.
\end{proof}

\section{Inference of density generators of meta-elliptical copulas}
\label{inference_section}

In this section, we define three inference strategies to evaluate $g$, since we now know that such generators are nonparametrically identifiable under some regularity conditions and two moment-type constraints.

\mds

Let $\X$ be a random vector whose distribution is trans-elliptical $\Tc\Ec_d(\Sigmabf, g, F_1, \dots , F_d)$ for a correlation matrix $\Sigmabf$. 
Let $(\X_1,\ldots,\X_n)$ be 
an i.i.d. sample of realizations of $\X$. 
As a particular case, its law could be elliptical $\Ec_d(\0,\Sigmabf, g)$ when all its margins $F_k$ are equal to $F_g$.
Moreover, if its margins are uniformly distributed on $[0,1]$, then the law of $\X$ is given by a meta-elliptical copula $\Mc\Ec_d(\Sigmabf, g)$.
We assume there exists a single generator $g$ such that Condition~(\ref{cond_dens_gen}) and Condition~(\ref{cond_density_fg}) are fulfilled, for some given constant $b>0$, i.e.,
\begin{align}
    s_d\int_0^{+\infty} t^{d/2-1} g(t)\, dt=2,\;\;
    s_{d-1}\int_0^{+\infty} t^{d/2-3/2} g(t)\, dt=2b.
    \label{cond:ell_cop_identifiable}
\end{align}
As proven in Section~\ref{identif}, this is in particular the case when the conditions of Proposition~\ref{prop_identif_g} (when $\Sigmabf\neq \I_d$)
or Proposition~\ref{identif_cop_ellip_Id} (when $\Sigmabf=\I_d$) are satisfied.
Any candidate for the underlying density generator may be normalized to satisfy the two latter conditions, 
for instance through a transform $t\mapsto \alpha g(\beta t)$ with two conveniently chosen
positive constants $\alpha$ and $\beta$.
This leads to the ``normalizing'' Algorithm~\ref{algo:normalization_g}.
In practical terms, the choice of $b$ does not really matter. We simply advise to set $b=1$ by default, our choice hereafter.

\begin{algorithm}[htbp]
\SetAlgoLined
    \vspace{0.1cm}
    \KwIn{An estimate $\hat g$ of the generator for a meta-elliptical copula of dimension $d \geq 2$.}
    Compute $\Ic_1 = \int_0^{+ \infty} t^{ d/2-1} \hat g(t) dt$
    and $\Ic_2 = \int_0^{+ \infty} t^{ d/2-3/2} \hat g(t) dt$ \;
    Set $\beta = \big(b \,s_{d} \, \Ic_1/  (s_{d-1} \,\Ic_2) \big)^2$
    and $\alpha = 2 \beta^{d/2} / (s_d \, \Ic_1)$ \;
    Calculate $\tilde g := \{ t \mapsto \alpha \times \hat g(\beta \times t) \}$ \;

    \KwOut{A modified version $\tilde g$ satisfying the normalization and identification constraints (\textcolor{black}{Eq.~(\ref{cond_dens_gen})} and~(\ref{cond_density_fg})).}
\caption{Normalization of a meta-elliptical copula generator}
\label{algo:normalization_g}
\end{algorithm}

As usual, the marginal distributions $F_j$, $j\in \{1,\ldots,d\}$ of $\X\sim \Tc\Ec_d(\Sigmabf, g, F_1, \dots , F_d)$ will be consistently estimated by their empirical counterparts $\hat F_j$: for every $x\in \Rb$ and $j$, $\hat F_j(x)
:= n^{-1}\sum_{i=1}^n \1(X_{i,j} \leq x)$,
where $\X_i:=(X_{i,1},\ldots,X_{i,d})$.
As announced, the goal is now to propose nonparametric estimators of the generator $g$, assuming that $g$ is identifiable. 
To the best of our knowledge, this paper is the first to propose solutions to this problem in a well-suited rigorous theoretical framework.


\mds
We will use the following notations:
\begin{itemize}
\item $U_k := F_k(X_k)$ and
$\hat U_{i,k} := \hat F_k(X_{i,k})$
for $k\in \{ 1, \dots, d\}$, $i\in \{1, \dots, n\}$. Set $\U:=(U_1,\ldots,U_d)$ and
$\hat \U_i:=(\hat U_{i,1},\ldots,\hat U_{i,d})$, $i\in \{1,\ldots,n\}$.
\item the sample of (unobservable) realizations of $\U$ is $\Uc:=(\U_1,\ldots,\U_n)$; the sample of pseudo-observations $\hat\U$ is
$\widehat\Uc := (\hat\U_1,\ldots,\hat\U_n)$.
\end{itemize}
Since $\X\sim \Tc\Ec(\Sigmabf,g,F_1,\ldots,F_d)$, note that the law of $\U \sim \Mc\Ec(\Sigmabf,g)$ is the meta-elliptical copula $C$ of $\X$.

\subsection{Penalized M-estimation}
\label{sec:NPMLE}

Without any particular parametric assumption, regular generators $g$ are living in the infinite dimensional functional space $\Gc$.
Its subset of generators that satisfy the two identifiability constraints~(\ref{cond_dens_gen}) and~(\ref{cond_density_fg}) will be denoted by $\Ic$.
In practical terms, we could
approximate $\Gc$ by finite dimensional parametric families $\Gc_m=\{ g_{\theta}; \theta \in \Theta_m\}$, where the dimension of $\Theta_m$ is denoted by $p_m$. Most of the time, $p_m\rightarrow \infty$ with $m$, and the family
$(\Gc_m)$ is increasing: $\Gc_m \subset \Gc_{m+1}$, even if this requirement is not mandatory.
Moreover, it is usual that $m=m_n$ and $m_n$ tends {\color{black}to infinity} with $n$, as in the ``method of sieves'' for inference purpose (see the survey~\cite{chen2007large}, e.g.). Nonetheless, we do not impose the latter constraint again.
Ideally, $\cup_m\Gc_m$ is dense in $\Gc$ for a convenient norm (typically, in a $L^r$ space). A less demanding requirement would be to assume 
$g\in\overline{ \cup_m \Gc_m}$, a condition that is sufficient for our purpose.
Therefore, a general estimator of $\theta$ would be
\begin{equation}
 \hat \theta_{n,m} :=\arg\min_{\theta \in \Theta_m} \Gb_n(\theta,\widehat\Uc) + \pp_n(\lambda_n,\theta),
 \label{Mestimator_theta}
 \end{equation}
for some empirical loss function $\Gb_n$, some penalty $\pp_n(\cdot,\cdot)$ and some tuning parameter $\lambda_n$.

\mds

Typically, the loss function is an average of the type
\begin{equation*}
  \Gb_n \big(\theta,(\u_1,\ldots,\u_n)\big) = \frac{1}{n}\sum_{i=1}^n \ell_n(\theta,\u_i),
\label{def_loss_map}
\end{equation*}
for some map $\ell_n$ from $\Theta_n\times \Rb^d$ to $\Rb$.
For instance, for the penalized canonical maximum likelihood method, set $  \Gb_n(\theta,\widehat\Uc) = -\sum_{i=1}^n \ln c_{g_\theta}(\hat \U_i)/n$.
Many other examples of loss functions could be proposed, based on $L^r$-type distances between cdfs', densities or even characteristic functions.

\mds

Concerning the choice of $\Gc_m$, an omnibus strategy could be to rely on Bernstein approximations: 
for any $a>0$, define the family of polynomials
$$ \Gc_{m,a} := \big\{g \in \Ic: g(x)= \sum_{k=0}^{m} b_k (a+x)^k (a-x)^{m-k}  \, \1(x\in [0,a]),\;\; b_k\in \Rb^+ \;\; \forall k\big\}.   $$
Any continuous map $g$ can be uniformly approximated on $[0,a]$ by Bernstein polynomials that are members of $\Gc_{m,a}$, for $m$ sufficiently large (see~\cite{levasseur1984probabilistic}, e.g.). 
If $g(t) \rightarrow 0$ when $t$ tends {\color{black}to infinity}, then, for every $\epsilon>0$, there exists $a(\epsilon)>0$, an integer $m(\epsilon)$ and a map $g_{\epsilon}$ in $\Gc_{m(\epsilon),a(\epsilon)}$ such that $\| g -g_{\epsilon} \|_{\infty}<\epsilon$. If $g$ is compactly supported, simply set $a(\epsilon)$ as the upper bound of $g$'s support.
If $g\in L^r$, $r>0$ and is continuous, then there exists a similar approximation in $L^r$. 
Therefore, the set $\Gc_m$ in~(\ref{Mestimator_theta}) may be chosen as $\Gc_{m(\epsilon),a(\epsilon)}$ for a given $\epsilon \ll 1$, 
obtained with prior knowledge about the true underlying density generator.

\mds

Alternatively, if $g\in L^2(\Rb)$, introduce an orthonormal basis $(h_n)_{n\geq 0}$ of the latter Hilbert space, say the Hermite functions. 
Then, $g$ can be decomposed as $g=\sum_{k\geq 0}< g,h_k> h_k$ and $\Gc_m$ could be defined as 
$$ \Gc_m:=\big\{ h^+=\max(h,0) \,:\, h(x)=\sum_{k=0}^m b_k h_k(x), \;\; b_k \in \Rb\;\; \forall k       \big\}.$$
Note that every latter map $h^+$ is not differentiable at a finite number of points, the roots of $h$. 
Thus, the members of $\Gc_m$ as defined above do not satisfy Condition~\ref{reg_generator}. This can be seen as a theoretical 
drawback and this can be removed by conveniently smoothing the functions of $\Gc_m$. For instance, replace all latter maps $h^+$ by a smoothed approximation 
$x\mapsto h^+ \ast \phi_N(x)$, $\phi_N(t):=N\exp(-\pi N^2 t^2)$, and for some $N\gg1$ and any $t$.

\mds
Another alternative family of generators supported on $\Rb^+$ could be
\begin{eqnarray*}
\lefteqn{
\Gc_m = \Big\{g \in \Ic: g(x)= \sum_{\ell=1}^{m} Q^2_{\ell}(x) \, \exp\big(- (x- \mu_\ell)^{\alpha_\ell}/\sigma_\ell \big), \text{ for some polynomials } Q_\ell,  }\\
&&\hspace{3cm} \text{deg}(Q_\ell) \leq q_m,\;\text { and constants } (\mu_\ell,\alpha_\ell,\sigma_\ell)\in \Rb\times \Rb_+^* \times \Rb_+, \ell\in \{1,\ldots,m\} \,\Big\}, \hspace{8cm}
\end{eqnarray*}
where $(q_m)$ denotes a sequence of integers that tends {\color{black}to infinity} with $m$. The dimension $p_m$ of $\Gc_m$ (or $\Theta_m$, similarly) is then $p_m=m(q_m+4)$. We do not know whether $\cup_m \Gc_m$ is dense in $L^r$ for any $r>0$ and well-chosen sequences $(q_m)$. 
Nonetheless, we conjecture that most ``well-behaved'' density generators can be accurately approximated in some $L^r$ spaces 
by some elements of $\Gc_m$, at least when $m$ and $q_m$ are sufficiently large.

\mds

If we had observed true realizations of $\U$, i.e., if $\Uc$ replaces $\widehat\Uc$ in~(\ref{Mestimator_theta}), then
one could apply some well established theory of penalized estimators: see Fan and Li~\cite{fan2001variable}, Fan and Peng~\cite{fan2004nonconcave} (asymptotic properties), Loh~\cite{loh2017statistical} (finite distance properties), among others.
When the loss $\Gb_n$ is the empirical likelihood and there is no penalty, $\hat\theta_{n,m}$ is called the Canonical Maximum Likelihood estimator of $\theta_0$ (\cite{genest1995semiparametric,
shih1995inferences}), assuming the true density $g=g_{\theta_0}$ belongs to $\Gc_m$. Tsukahara~\cite{tsukahara2005semiparametric, tsukahara2011erratum} has developed the corresponding theory in the wider framework of rank-based estimators.
When the parameter dimension is fixed ($p_m$ is a constant), the limiting law of $\hat\theta_{n,m}$ can be deduced as 
a consequence of the weak convergence of an empirical copula process, here the empirical
process associated to $\widehat\Uc$: see~\cite{berghaus2017weak, fermanian2004weak, ghoudi1998emprical, ghoudi2004empirical}.
Nonetheless, to the best of our knowledge, the single existing general result that is able to simultaneously manage pseudo-observations and penalizations is~\cite{poignard2021finite}. 
The latter paper extends~\cite{loh2017statistical} to state finite distance boundaries for some norms of the difference between the estimated parameter and the true one.
\textcolor{black}{In this paper, we slightly extend their
results to deal with~(\ref{Mestimator_theta}), i.e., with a sequence of parameter spaces $(\Theta_m)$.
This yields the finite distance properties of $\hat \theta_{n,m}$: see~\ref{prop_thetahat_finite_dist}.
}

\subsection{Simulation-based inference}
\label{simul_dens_ellip}

The previous inference strategy is mainly of theoretical purpose and would impose difficult numerical challenges. 
In particular, the log-likelihood criteria require the evaluation of copula densities through $g$, $f_g$ and $F_g$.  
Unfortunately, $F_g$ is a complex map that is not known analytically in general.
Here, we propose a way of avoiding the numerical calculations of $F_g$ , $Q_g$ or $f_g$, 
by using the fact that it is very easy to simulate elliptical random vectors.

\mds

To be specific, consider a trans-elliptical random vector $\X\sim \Tc\Ec_d(\Sigmabf, g, F_1, \dots , F_d)$. Its copula $C$ is then meta-elliptical $\Mc\Ec_d(\Sigmabf,g)$.
By definition and with our notations, $C$ must satisfy
$$ C\big(F_{g}(x_1),\ldots ,F_{g}(x_d)\big)= H_{g,\Sigmabf,d}(x_1,\ldots,x_d),$$
for every $\x:=(x_1,\ldots,x_d)^\top$.
Note that all margins are the same because $\Sigmabf$ is a correlation matrix.
The goal is still to estimate $g$, with a sufficient amount of flexibility. 
To fix the idea, assume that $g$ belongs to $\overline{ \cup_m \Gc_m}$, with the same notations as in Section~\ref{sec:NPMLE}.
\textcolor{black}{Therefore, as in Section~\ref{sec:NPMLE}, the idea would be to approximate the true generator $g$ by some map that belongs to 
a parametric family $\Gc_m$, for some ``large'' $m$.}

\mds

First, let us estimate the copula $C$ non-parametrically, for instance by the empirical copula $C_n$ based on the sample $(\X_i)_{i\in\{1,\ldots,n\}}$.

\mds

Second, assume for the moment that $(g,\Sigmabf)$ is known. 
For any arbitrarily large integer $N$, let us draw a $N$ sample $(\Y_1,\ldots,\Y_N)$ of independent realizations of $\Y \sim H_{g,\Sigmabf,d}$.
This is easy thanks to the polar decomposition of elliptical vectors: $\Y \stackrel{law}{=} R \A^\top \V,$
where $\A^\top \A=\Sigmabf$, $\V$ is uniformly distributed on the unit ball in $\Rb^d$, and $R$ has a density that is a simple function of $g$.
All margins of $\Y$ have the same distributions $F_g$, and denote by $\hat F_g$ an empirical counterpart: for every $y\in \Rb$,
\begin{equation*}
    \hat F_g(y) := \frac{1}{dN}
    \sum_{l=1}^N \sum_{k=1}^d \1(Y_{k,l}\leq y).
\end{equation*}
Moreover, denote by $\hat H_{g,\Sigmabf}$ the joint empirical cdf of $\Y$, i.e., $ \hat H_{g,\Sigmabf}(\y)=N^{-1}\sum_{l=1}^N \1(\Y_{l}\leq \y).$
Then, we expect we approximately satisfy the relationship
$$ C_n\circ \vec F_g(\x):=C_n\big(\hat F_g(x_1),\dots ,\hat F_g(x_d)\big) \simeq \hat H_{g,\Sigmabf}(x_1,\dots,x_d),\; \x\in \Rb^d.$$

\mds
Obviously, since we do not know $g$ nor $\Sigmabf$, we cannot draw the latter sample $(\Y_i)_{i\in \{1,\ldots,N\}}$ strictly speaking.
Then, we will replace $\Sigmabf$ by a consistent estimator $\hat\Sigmabf$. Moreover, for any current parameter value $\theta$, we can generate the $N$ sample 
$(\Y_{\theta,1},\ldots, \Y_{\theta,N})$, $\Y_{\theta,l} \sim \Ec_d(\0,\hat\Sigmabf, g_\theta )$. 
The associated empirical
marginal and joint cdfs' are denoted
by $\hat F_{g_\theta}$ and $\hat H_{g_\theta,\hat \Sigmabf}$.
Therefore, for a fixed $m$, an approximation of $g$ will be given by $g_{\hat \theta_m}$, where
$$ \hat\theta_m := \arg\min_{\theta \in \Theta_m}
 \Dc\big(  C_n\circ \vec F_{g_\theta} ; \hat H_{g_\theta,\hat \Sigmabf} \big),$$
for some discrepancy $\Dc$ between cdfs' on $\Rb^d$  and some parameter set $\Theta_m$ in $\Rb^{p_m}$.
\textcolor{black}{Note that $\hat\theta_m$ implicitly depends on $n$ and $N$.}
For instance, consider
$$ \Dc\big(  C_n\circ \vec F_{g_\theta} ; \hat H_{g_\theta,\hat \Sigmabf} \big) := 
\int \Big\{  C_n\big(\hat F_{g_\theta}(x_1), \dots, \hat F_{g_\theta}(x_d)\big) - \hat H_{g_\theta,\hat \Sigmabf}(\x) \Big\}^2 w(\x) \, d\x,$$
for some weight function $w$.
To avoid the calculation of $d$-dimensional integrals, it is possible to choose 
some "chi-squared" type discrepancies instead, as
$$ 
 \Dc_{\chi}\big(  C_n\circ \vec F_{g_\theta} ; \hat H_{g_\theta,\hat \Sigmabf} \big) :=
\sum_{l=1}^{L}
\Big| \int_{B_l} dC_n\big(\hat F_{g_\theta}(x_1), \dots, \hat F_{g_\theta}(x_d)\big) - \hat H_{g_\theta,\hat\Sigmabf}(d\x) \Big| ,$$
for some partition $(B_1, \dots, B_L)$ of $\Rb^d$. 
\textcolor{black}{
Alternatively, we could replace the measure $w(\x)\,d\x$ with the empirical law of our observations, yielding
$$ \Dc_{emp}\big(  C_n\circ \vec F_{g_\theta} ; \hat H_{g_\theta,\hat \Sigmabf} \big) := 
\frac{1}{N}\sum_{i=1}^N \Big\{  C_n\big(\hat F_{g_\theta}(X_{1,i}), \dots, \hat F_{g_\theta}(X_{d,i})\big) - 
\hat H_{g_\theta,\hat \Sigmabf}(\X_i) \Big\}^2 ,$$
and this modification avoids suffering from the curse of dimensionality. Unfortunately, whatever the chosen criterion, stating the limiting law of $\hat\theta_m$ when 
$N$ and $n$ tend to infinity (or even the limiting law of $g_{\hat\theta_m}-g$ when 
$m$, $N$ and $n$ tend to infinity) seems to be a particularly complex task that lies beyond the scope of this paper.}

\mds

\subsection{An iterative algorithm: MECIP, or ``Meta-Elliptical Copula Iterative Procedure''}
\label{sec:iteration_algorithm_estimation}

In this section, we propose a numerical recursive procedure called MECIP that allows the estimation of $g$.

\mds

The appendix refers to several estimation procedure{\color{black}s} of the density generator of an elliptical distribution.
We select one of them, that will be called $A$.  
Formally, $A$ is the operator that maps an i.i.d. dataset $(\Z_1, \dots, \Z_n) \in \Rb^{d \times n}$ generated from an elliptical distribution $\Ec_d(\mubf, \Sigmabf, g)^{\otimes n}$ to a map $\hat g := A(\Z_1, \dots, \Z_n)$:
\begin{equation}
\begin{matrix}
A: & \Rb^{d \times n}    & \longrightarrow & \Fc \\
 & (\Z_1, \dots, \Z_n) & \mapsto         & \hat g,
\end{matrix}
\end{equation}
where $\Fc$ is the set of all possible density generators of elliptical distributions. 
Note that any $g$ in $\Fc$ has to satisfy~(\ref{cond_dens_gen}), but not~(\ref{cond_density_fg}).

\mds

In our case, we do not have access to a dataset following an elliptical distribution. Assume that we observe a dataset $\U_1, \dots, \U_n$ following a meta-elliptical copula $\Mc\Ec(\Sigmabf,
g)$.
If we knew the true generator $g$, we could compute the univariate quantile function $Q_g$, and, as a consequence, 
$\vec Q_g(\U_i) := \big( Q_g(U_{i,1}), \dots, Q_g(U_{i,d}) \big)^\top \sim \Ec_d(\0, \Sigmabf, g)$ for every $i\in \{1, \dots, n\}$.
Therefore, we could define an ``oracle estimator'' of $g$ by
\begin{align}
    \hat g^{oracle}
    := A \big( \vec Q_g(\U_1), \dots, \vec Q_g(\U_n) \big)
    \color{black}{=:A_n(g)}.
    \label{eq:oracle_estimator_g}
\end{align}
In practice, two issues arise that prevent us from using this oracle estimator.
First, we do not have access to the true distributions $F_1, \dots, F_d$, but only to empirical {\color{black}cdfs} $\hat F_1, \dots,
\hat F_d$. As usual, it is possible to replace the ``unobservable'' realizations $U_{i,k} = F_k(X_{i,k})$ by pseudo-observations $\hat U_{i,k} = \hat F_k(X_{i,k})$ in
Eq.~(\ref{eq:oracle_estimator_g}).
Second and more importantly, we need $Q_g$, i.e., $g$ itself, to compute the oracle estimator $\hat g^{oracle}$. This looks like an impossible task.

\mds

To solve the problem, we propose an iterative algorithm as follows. We fix a first estimate $\hat g^{(0)}$ of $g$, so that  we can compute
$\hat \Z^{(1)}_i = \vec Q_{\hat g^{(0)}} (\hat \U_i),$ for every $i\in \{1, \dots, n\}$.
From this first guess, we can compute an estimator
$\hat g^{(1)} := A \big( \hat \Z^{(1)}_{1}, \dots, \hat \Z^{(1)}_{n} \big)$.
Note that this estimator should be normalized in order to satisfy the necessary condition that is related to the identifiability of $g$. At this stage, we impose Condition~(\ref{cond_density_fg}), for a fixed constant $b$, and invoke Algorithm~\ref{algo:normalization_g}.
Iteratively, for any $N \in \Nb$, we define
$\hat \Z^{(N)}_i = \vec Q_{\hat g^{(N-1)}} (\hat \U_i)$
and $\hat g^{(N)} := A \big( \hat \Z^{(N)}_{1}, \dots, \hat \Z^{(N)}_{n} \big)$.
This procedure MECIP is detailed in Algorithm~\ref{algo:basic_iteration_estimation_elliptical} below.
See Fig.~\ref{fig:simplified_flowchart} too. 

\mds 

{\textcolor{black}{To summarize, for a fixed sample size $n$, the latter recursive algorithm is classical in the domain of fixed points analysis: we assume there exists a generator such that $g_n^*=A_n(g_n^*)$ and we approximate $g_n^*$ by a recursion $\hat g_n^{N+1}=A_n\big(\hat g_n^{N}\big)$. When $N\rightarrow \infty$, we hope that $\hat g_n^{N} \simeq g_n^*$. 
This is the case if $A_n$ is a contraction (Banach Contraction Principle), but the latter property is not guaranteed.
Moreover, with a large $n$, we hope that $g_n^*$ tends to the true underlying generator $g$, because of the consistency 
of the estimation procedure $A_n$. In other words, our algorithm is a mix between fixed point search procedures and classical nonparametric inference.
The theoretical study of its convergence properties appears as particularly complex, due to its multiple nonlinear stages.   
}}

\mds

Hereafter, $A$ will be chosen as Liebscher's estimation procedure~\cite{liebscher2005_semiparametric} to evaluate the generator of an elliptical distribution. 
It requires the introduction of 
the instrumental map $\psi_a$ defined by $\psi_a(x) := -a + (a^{d/2}+x^{d/2})^{2/d}$ for any $x\geq 0$ and some constant $a>0$. 
Since Liebscher's method is non-parametric, we need a usual univariate kernel $K$ (here, the Gaussian kernel) and a bandwidth $h=h_n$, $h_n\rightarrow 0$ when $n$ tends {\color{black}to infinity}. See the exact formula of Liebscher's estimator below, in Algorithm~\ref{algo:basic_iteration_estimation_elliptical}.

\mds

We will propose three ways of initializing the algorithm:
\begin{itemize}
\item[(i)] the ``Gaussian'' initialization, where $\hat g^{(0)} := e^{-x}$, suitably normalized (with the notations of Algorithm~\ref{algo:normalization_g}, $\Ic_1=\Gamma(d/2)$ and $\Ic_2=\Gamma(d/2-1/2)$); 
\item[(ii)] the ``identity'' initialization, in which $\hat g^{(0)} := A \big( (\hat U_{i,j})_{1 \leq i \leq n, \, 1 \leq j \leq d} \big)$ as if, in lines 8-9 of Algorithm~\ref{algo:basic_iteration_estimation_elliptical}, the quantile function $Q_{\hat g^{(N-1)}}$ were replaced by the identity map;
\item[(iii)] the ``$A\sim \Phi^{-1}$'' initialization, where $\hat g^{(0)} := A \Big( \big(\Phi^{-1}(\hat U_{i,j})\big)_{1 \leq i \leq n, \, 1 \leq j \leq d} \Big)$ and $\Phi$ denotes the cdf of a Gaussian $\Nc(0,1)$ distribution.
\end{itemize}
Solution (ii) may seem a brutal approximation and can be considered as an ``uninformative prior''. 
It can actually be well-suited, compared to (i), when the true generator is far from the Gaussian generator: see Fig.~\ref{fig:MSE_different_h_g}.
The main motivation of (iii) is to put $U$ whose support is $[0,1]$ back on the whole real line $\Rb$, through a usual numerical trick.

\mds

The implementation of the algorithm of MECIP is available in the \texttt{R} package \texttt{ElliptCopulas}~\cite{package_ElliptCopulas}.

\tikzstyle{block} = [rectangle, draw, align=center]
\tikzstyle{arr} = [-stealth]
\tikzstyle{arr_iter} = [-stealth, blue, thick]
\tikzstyle{arr_startEnd} = [-stealth, blue]

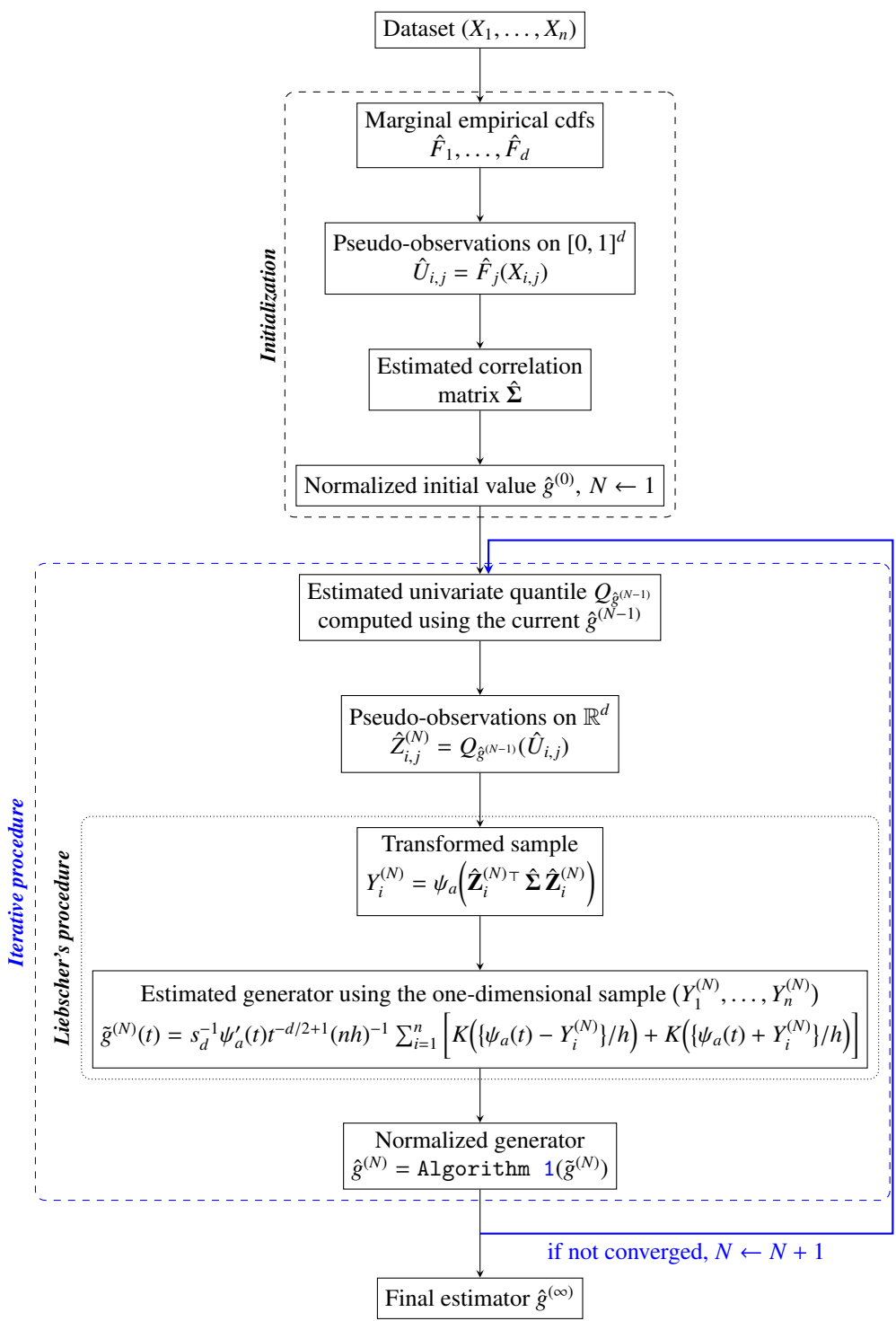
\begin{figure}[htb]
    \centering
    \begin{tikzpicture}[node distance = 0.8cm]
    
    \node[block] (dataset) {Dataset $(X_1, \dots, X_n)$};
    
    \node[block, below=of dataset.south] (empcdf) {Marginal empirical cdfs \\ $\hat F_1, \dots, \hat F_d$};
    
    \node[block, below=of empcdf.south] (pseudoobs) {Pseudo-observations on $[0,1]^d$ \\
    $\hat U_{i,j} = \hat F_j(X_{i,j})$};
    
    \node[block, below=of pseudoobs.south] (corrmat) {Estimated correlation \\ matrix $\hat \Sigmabf$};
    
    \node[block, below=of corrmat.south] (init_g) {Normalized initial value $\hat g^{(0)}, \, N \leftarrow 1$};
    
    \node[block, below=of init_g.south, yshift=-0.2cm] (univQuantiles) {Estimated univariate quantile $Q_{\hat g^{(N-1)}}$ \\
    computed using the current $\hat g^{(N-1)}$};
    
    \node[block, below=of univQuantiles.south] (pseudoobsRd) {Pseudo-observations on $\Rb^d$ \\ $\hat Z_{i,j}^{(N)} = Q_{\hat g^{(N-1)}}(\hat U_{i,j})$};
    
    \node[block, below=of pseudoobsRd.south] (transSample) {Transformed sample \\
    $Y_i^{(N)} = \psi_a \bigg(\hat \Z_{i}^{(N)}{}^\top \,\hat \Sigmabf  \, \hat \Z_{i}^{(N)} \bigg)$
    };
    
    \node[block, below=of transSample.south] (esti_g) {Estimated generator using the one-dimensional sample $\big(Y_1^{(N)}, \dots, Y_n^{(N)}\big)$ \\
    $\tilde g^{(N)}(t) = s_d^{-1} \psi_a'(t) t^{-d/2+1} (n h)^{-1} \sum_{i=1}^n \bigg[
    K \Big( \big\{\psi_a(t) - Y_i^{(N)} \big\}/h\Big)
    + K \Big( \big\{\psi_a(t) + Y_i^{(N)} \big\}/h\Big)
    \bigg]$
    };
    
    \node[block, below=of esti_g.south] (normalized_g) {Normalized generator \\
    $\hat g^{(N)} = \texttt{Algorithm \ref{algo:normalization_g}} (\tilde g^{(N)})$
    };
    
    \node[block, below=of normalized_g.south, yshift=-0.5cm] (final_g) {Final estimator $\hat g^{(\infty)}$};
    
    \path[draw, arr] (dataset) -- (empcdf);
    \path[draw, arr] (empcdf) -- (pseudoobs);
    \path[draw, arr] (pseudoobs) -- (corrmat);
    \path[draw, arr] (corrmat) -- (init_g);
    
    \path[draw, arr] (init_g) -- (univQuantiles)
    node [pos=0.5] (emptyFinal_4) {};
    \path[draw, arr] (univQuantiles) -- (pseudoobsRd);
    \path[draw, arr] (pseudoobsRd) -- (transSample);
    \path[draw, arr] (transSample) -- (esti_g);
    \path[draw, arr] (esti_g) -- (normalized_g);
    
    \path[draw, arr] (normalized_g) -- (final_g) 
    node [pos=0.5] (emptyFinal) {};
    
    \node[right= of emptyFinal, xshift = 5cm] (emptyFinal_2) {};
    \node[right= of emptyFinal_4, xshift = 5cm] (emptyFinal_3) {};
    \path[draw, arr, blue, thick] (emptyFinal.center) -- (emptyFinal_2.center)
    node [pos=0.5, below] {if not converged, $N \leftarrow N+1$}
    --  (emptyFinal_3.center) -- (emptyFinal_4.east) -- (univQuantiles.76);
    
    \node (initbox) [draw, rounded corners, dashed, inner sep=1ex, 
        fit=(empcdf) (pseudoobs) (corrmat) (init_g),
        label={[node font=\small\itshape\bfseries,rotate=90,anchor=south]left: Initialization}
        ]
        {};
                
    \node (lieb) [draw, rounded corners, densely dotted, black, inner sep=1ex, 
        fit=(transSample) (esti_g),
        label={[node font=\small\itshape\bfseries,rotate=90,anchor=south, name=labelLieb]left: Liebscher's procedure}
        ]
        {};
                
    \node (iteration) [draw, blue, rounded corners, dashed, inner sep=1ex, 
        fit=(lieb) (normalized_g) (univQuantiles) (labelLieb),
        label={[node font=\small\itshape\bfseries,rotate=90,anchor=south,color=blue]left: Iterative procedure}
        ]
        {};
                
    \end{tikzpicture}
    
    \caption{\textnormal{Simplified flowchart of the iterative estimation procedure MECIP.}}
    \label{fig:simplified_flowchart}
\end{figure}

\begin{algorithm}[htbp]
\label{algo:basic_iteration_estimation_elliptical}
\SetAlgoLined
    \vspace{0.1cm}
    \KwIn{A dataset $(\X_1, \dots ,\X_n)$, $n > 0$.}
    \KwIn{An estimation method $A$ for elliptical distribution density generators.}
    \For{$j \leftarrow 1$ \KwTo $d$} {
        Compute the empirical cdf $\hat F_j$ and the pseudo-observations $\hat U_{i,j} := \hat F_j(X_{i,j})$ for all $i\in \{1, \dots, n\}$\;
    }
    Compute an estimator of the correlation matrix $\hat \Sigmabf$ of the elliptical copula using Kendall's taus \;
    Initialize $N:=1$. Initialize $g$ to a value $\hat g^{(0)}$ \;
    Normalize $\hat g^{(0)} := \texttt{Algorithm \ref{algo:normalization_g}} (\tilde g^{(0)})$ \;
    \Repeat(){convergence of $\hat g$}{
        Compute the univariate quantile function
        $Q_{\hat g^{(N-1)}}$ associated with the elliptical distribution~$\Ec_d(\0, \I_d, \hat g^{(N-1)})$;
        
        For every $i\in \{1, \dots, n\}$ and $j\in \{1, \dots, d\}$, compute $\hat Z_{i,j}^{(N)}
        := Q_{\hat g^{(N-1)}} (\hat U_{i,j})$\;
        Update $\hat g^{(N)}
        := A(\hat \Z_1^{(N)}, \dots, \hat \Z_n^{(N)} \, ; \, \hat \Sigmabf)$, corresponding to the following step when $A$ is Liebscher's procedure with a given $a>0$\;
         \Begin{
             \tcc{Liebscher's procedure}
             For $i\in \{1, \dots, n\}$, let
             $Y_i = \psi_a \Big(\hat \Z_{i}^{(N)} {}^\top
             \hat \Sigmabf^{-1} \hat \Z_{i}^{(N)} \Big)$ \;
             Let $\tilde g^{(N)}(t) = s_d^{-1} \psi_a'(t) t^{-d/2+1} (n h)^{-1} \sum_{i=1}^n \bigg[
            K \Big( \big\{\psi_a(t) - Y_i\big\}/h\Big)
             + K \Big( \big\{\psi_a(t) + Y_i\big\}/h\Big)
             \bigg]$ \;
 }
        Normalize $\hat g^{(N)} := \texttt{Algorithm \ref{algo:normalization_g}} (\tilde g^{(N)})$ \;
        Update $N = N+1$ \;
    }

    \KwOut{A normalized estimator $\hat g^{(\infty)}$ of $g$.}
\caption{Basic iteration-based procedure MECIP for the estimation of the elliptical copula density generator~$g$ with a given method $A$}
\end{algorithm}

{\color{black}

\subsection{Adjustments of the iterative algorithm MECIP in the presence of missing values}

Whenever a dataset contains missing values, the previous numerical procedure can be adapted to estimate the correlation matrix $\Sigmabf$ and the generator of the underlying elliptical copula. We consider the simplest case of missing at random observations. Note that many other missing patterns may exist, but a complete treatment of these cases is left for future work.

\mds

When missing values arise, the previous Algorithm~\ref{algo:basic_iteration_estimation_elliptical} will be adjusted as follows:
\begin{enumerate}
    \item Each empirical cdf $\hat F_j$ is estimated using all non-missing observations for the $j$-th variable.
    
    \item Pseudo-observations $\hat U_{i,j} := \hat F_j(X_{i,j})$ are defined as ``\NA'' whenever $X_{i,j}=\NA$ (i.e., is missing).
    
    \item Kendall's taus are estimated using pairwise complete observations. In other words, for two variables $1 \leq j_1 \neq j_2 \leq d$, the Kendall's tau between $X_{j_1}$ and $X_{j_2}$ is estimated using the set of observations $\big\{i\in\{1, \dots, n\}: X_{i,j_1} \neq \NA \text{ and } X_{i,j_2} \neq \NA \big\}$.
    
    \mds
    
    If the correlation matrix is not positive semi-definite, it is projected on the (convex) set of positive semi-definite matrices using the \texttt{R} function \texttt{nearPD}, which implements the method proposed in \cite{higham2002nearest}. See alternative methods in~\cite{higham2016restoring, qi2006quadratically}, among others. 
    
    \item The pseudo-observations
    $\hat Z_{i,j}^{(N)} := Q_{\hat g^{(N-1)}} (\hat U_{i,j})$ at iteration $N$ are defined to be $\NA$ whenever $\hat U_{i,j}=\NA$.
    
    \item At each step of the main loop, for every $i\in \{1,\dots,n\}$ such that some $\hat Z_{i,j}^{(N)}$ is \NA, we complete the vector $\hat \Z_i^{(N)}$ in the following way:
    let $miss(i)=\big\{j \in \{1, \dots,d\}: \hat Z_{i,j}^{(N)} = \NA\big\}$ be the set of the missing components for the $i$-th observation.
    The non-missing part of $\hat \Z_i^{(N)}$, denoted $\hat \Z_{i, -miss(i)}^{(N)}$, is left unchanged.
    If we knew the true generator $g$ and the correlation matrix $\Sigmabf$, we would use the non-missing part $\hat \Z_{i, -miss(i)}^{(N)}$ of the random vector $\hat \Z_{i}^{(N)}$ to complete the other entries by using their conditional law. 
    Indeed, if a vector $\Z$ follows an elliptical distribution $\Ec_d(\mubf,\Sigmabf,g)$, then, for any subset $I \subset \{1, \dots, d\}$, the law of $\Z_I$ given $(\Z_{-I} = \z_{-I})$ is still elliptical $\Ec_{|I|}(\mubf_{\z_{-I}},\Sigmabf_{\z_{-I}},g_{\z_{-I}})$.
    Some explicit expressions for these three conditional parameters are given in \cite[Corollary 5]{cambanis1981_theory}.
    Therefore, an ``oracle'' way of generating $\hat \Z_{i, miss(i)}^{(N)}$ is to draw
    \begin{align*}
        \hat \Z_{i, miss(i)}^{(N)}
        \sim \Ec_{|miss(i)|} \big(
        \mubf_{\hat \Z_{i, -miss(i)}^{(N)}},
        \Sigmabf_{\hat \Z_{i, -miss(i)}^{(N)}},
        g_{\hat \Z_{i, -miss(i)}^{(N)}} \big),
    \end{align*}
    neglecting the fact that $\hat \Z_i^{(N)}$ follows only approximately an elliptical distribution (unless $\hat g^{(N)} = g$, which is rather unlikely).
    However, $\Sigmabf$ and $g$ are unknown; then, we propose to replace $\Sigmabf$ by its empirical counterpart and $g$ by its most recent estimate $\tilde  g^{(N)}$. Finally, we obtain the updated ``feasible'' generating formula
    \begin{align}
        \hat \Z_{i, miss(i)}^{(N)}
        \sim \Ec_{|miss(i)|} \big(
        \hat \mubf_{\hat \Z_{i, -miss(i)}^{(N)}},
        \hat \Sigmabf_{\hat \Z_{i, -miss(i)}^{(N)}},
        \hat g_{\hat \Z_{i, -miss(i)}^{(N)}}^{(N)} \big),
        \label{eq:completion_Z_missing}
    \end{align}
    for some approximate conditional mean $\hat \mubf_{\hat \Z_{i, -miss(i)}^{(N)}}$ based on $\hat \Sigmabf$, some approximate conditional correlation matrix $\hat \Sigmabf_{\hat \Z_{i, -miss(i)}^{(N)}}$ based on $\hat \Sigmabf$ and some approximate conditional generator based on $\tilde g^{(N)}$ and on $\hat \Sigmabf$.
\end{enumerate}
}

\begin{algorithm}[htbp]
\label{algo:basic_iteration_estimation_elliptical_missing values}
\SetAlgoLined
    \vspace{0.1cm}
    \KwIn{A dataset $(\X_1, \dots ,\X_n)$, $n > 0$.}
    \KwIn{An estimation method $A$ for elliptical distribution density generators.}
    \For{$j \leftarrow 1$ \KwTo $d$} {
        Compute the empirical cdf $\hat F_j$ {\color{black}using available data for the $j$-th variable} and the pseudo-observations $\hat U_{i,j} := \hat F_j(X_{i,j})$ for all $i\in \{1, \dots, n\}$\;
    }
    Compute an estimator of the correlation matrix $\hat \Sigmabf$ of the elliptical copula using Kendall's taus {\color{black}estimated on pairwise complete observations} \;
    Initialize $N:=1$. Initialize $g$ to a value $\tilde g^{(0)}$ \;
    Normalize $\hat g^{(0)} := \texttt{Algorithm \ref{algo:normalization_g}} (\tilde g^{(0)})$ \;
    \Repeat(){convergence of $\hat g$}{
        Compute the univariate quantile function
        $Q_{\hat g^{(N-1)}}$ associated with the elliptical distribution~$\Ec_d(\0, \I_d, \hat g^{(N-1)})$;
        
        For every $i\in \{1, \dots, n\}$ and $j\in \{1, \dots, d\}$, compute $\hat Z_{i,j}^{(N)}
        := Q_{\hat g^{(N-1)}} (\hat U_{i,j})$ \;
        {\color{black}\ForIf{$i\in \{1, \dots, n\}$:}
        {$\Z_i$ contains missing values at entries $miss(i)$,} 
        {Simulate the missing part $\Z_{i,miss(i)}^{(N)}$ of $\Z_i^{(N)}$ using Eq.~\eqref{eq:completion_Z_missing}
        }
        }
        Update $\hat g^{(N)}
        := A(\hat \Z_1^{(N)}, \dots, \hat \Z_n^{(N)} \, ; \, \hat \Sigmabf)$, corresponding to the following step when $A$ is Liebscher's procedure with a given $a>0$ \;
         \Begin{
             \tcc{Liebscher's procedure}
             For $i\in \{1, \dots, n\}$, let
             $Y_i = \psi_a \Big(\hat \Z_{i}^{(N)} {}^\top
             \hat \Sigmabf^{-1} \hat \Z_{i}^{(N)} \Big)$ \;
             Let $\tilde g^{(N)}(t) = s_d^{-1} \psi_a'(t) t^{-d/2+1} (n h)^{-1} \sum_{i=1}^n \bigg[ 
            K \Big( \big\{\psi_a(t) - Y_i\big\}/h\Big)
             + K \Big( \big\{\psi_a(t) + Y_i\big\}/h\Big)
             \bigg]$ \;
 }
        Normalize $\hat g^{(N)} := \texttt{Algorithm \ref{algo:normalization_g}} (\tilde g^{(N)})$ \;
        Update $N = N+1$ \;
    }

    \KwOut{A normalized estimator $\hat g^{(\infty)}$ of $g$.}
\caption{Improved version of the iteration-based procedure MECIP for the estimation of the elliptical copula density generator~$g$ with a given method $A$}
\end{algorithm}








\FloatBarrier

\section{Numerical results for the iteration-based method MECIP}
\label{numerical_results}

\subsection{Simulation study in dimension 2}

For this simulation study, fix the dimension $d=2$ and the sample size $n=1000$. 
The values of the estimated generators are calculated on a grid of the interval $[0,10]$ with the step size $0.005$.
The correlation matrix is chosen as $\Sigmabf = \begin{pmatrix} 1 & 0.2 \\ 0.2 & 1\end{pmatrix}$. We use uniform marginal distributions, but still estimate them nonparametrically using the empirical distribution function as if they were unknown.

\mds

We consider the normalised versions of six possible generators:
$g(x) = 1/(1+x^2)$, $g(x) = e^{-x}$, $g(x) = e^{-x} + \text{bump}(x)$, 
$g(x) = e^{-x} + e^{-x/3} \cos^2(x)$, $g(x) = x/(1+x^3)$, and $g(x) = x^2 e^{-x^2}$,
where $\text{bump}(x) = \1\{x \in [1, 1+\pi]\} (x-1) (1+\pi-x) \sin(x-1)$ is a smooth function supported on $[1, 1+\pi]$.
The estimated generators obtained with the iterative method (Algorithm~\ref{algo:basic_iteration_estimation_elliptical} using Liebscher's procedure) are plotted on Fig.~\ref{fig:plots_iterations}. In general and after less than $N=10$ iterations, our estimated generators yield convenient approximations of the true underlying generators, even if $g(0)=0$ (a case that was excluded by Condition~\ref{reg_generator}). Nonetheless, when $g$ is highly non monotonic, as 
for ``double-bump'' generators, the iterative algorithm is less performing and larger sample sizes are required. 
{\color{black}Note that, when $d=2$, the parameter $a$ has no influence on the result, since for any $x>0$, $\psi_a(x) := -a + (a^{d/2}+x^{d/2})^{2/d} = x$.}

\begin{figure}
    \includegraphics[width = 18cm]{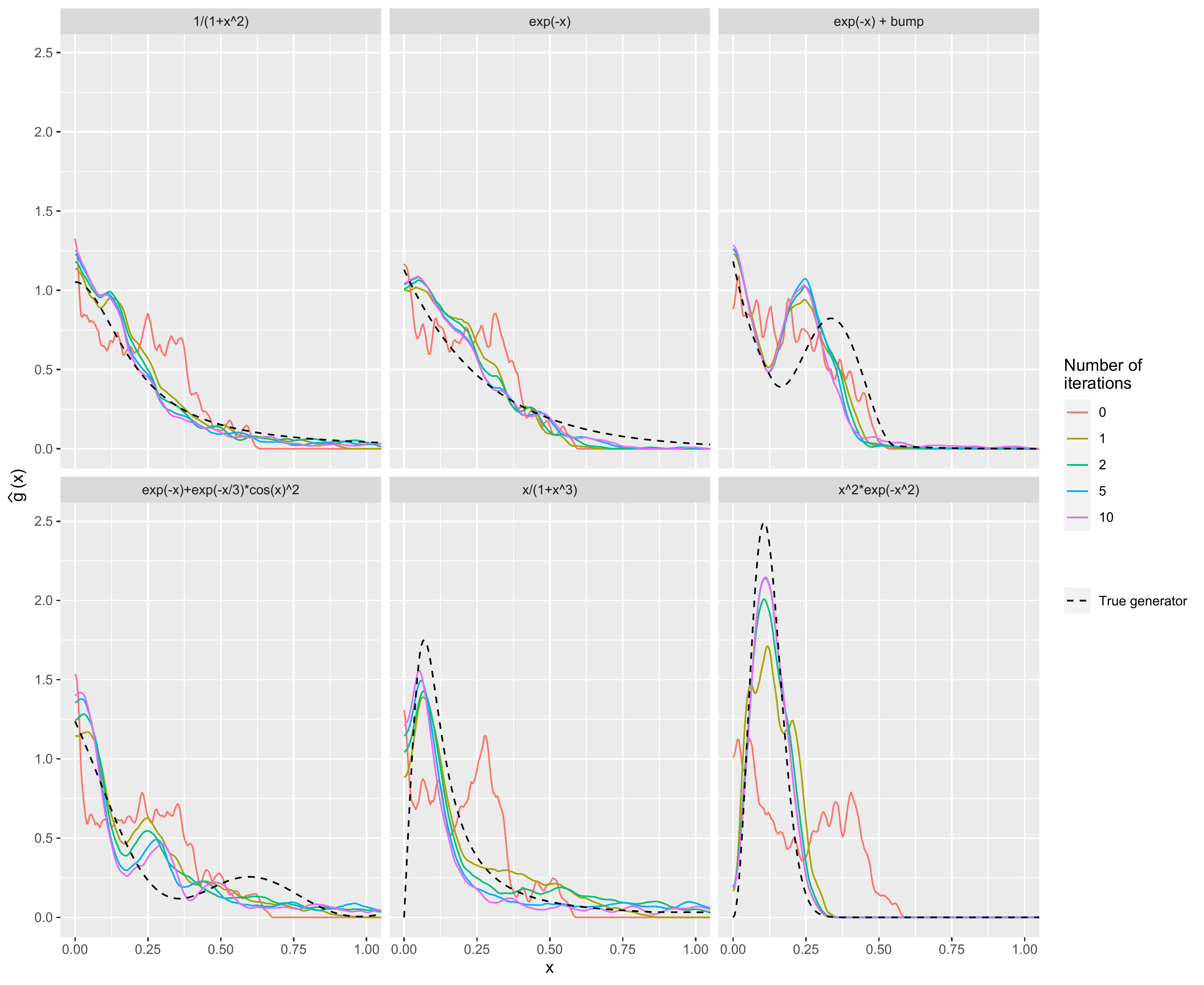}
    \caption{\textnormal{Estimated generators by the iterative method MECIP, with $n=1000$, $d=2$, $\rho=0.2$, $a=1$, $h=0.05$ and starting point = ``identity''.}}
    \label{fig:plots_iterations}
\end{figure}

\mds

In Fig.~\ref{fig:MSE_different_h_g}, the mean integrated squared error $\text{MISE}(\hat g_h) := E[||\hat g_h - g||_2^2]$ of our iterative estimator is plotted as a function of the bandwidth $h$, for different true generators $g$ and initialization strategies.
These MISE are computed using $N$ equal to ten and $100$ replications of each experiment. As expected, a clear-cut optimal bandwidth can be empirically identified for most generators. 

\mds

Restricting ourselves to the case of the Gaussian generator, we then study the joint influence of the sample size $n$ and of the bandwidth $h$ on the MISE of our estimators, using the ``identity'' initialization: see 
Fig.~\ref{fig:MSE_different_h_n_V3}. 
We find the same behaviors as for usual kernel-based estimators: empirically, the optimal bandwidths are "closely" linear functions of $\ln(n)$.
{\color{black}The computation time for the three initialization methods are compared in Fig.~\ref{fig:CompTime_startPoint}. They are mostly similar.
The ``Gaussian'' initialization is the fastest method as it is not data-dependent.}

\begin{figure}
    \includegraphics[width = 18cm]{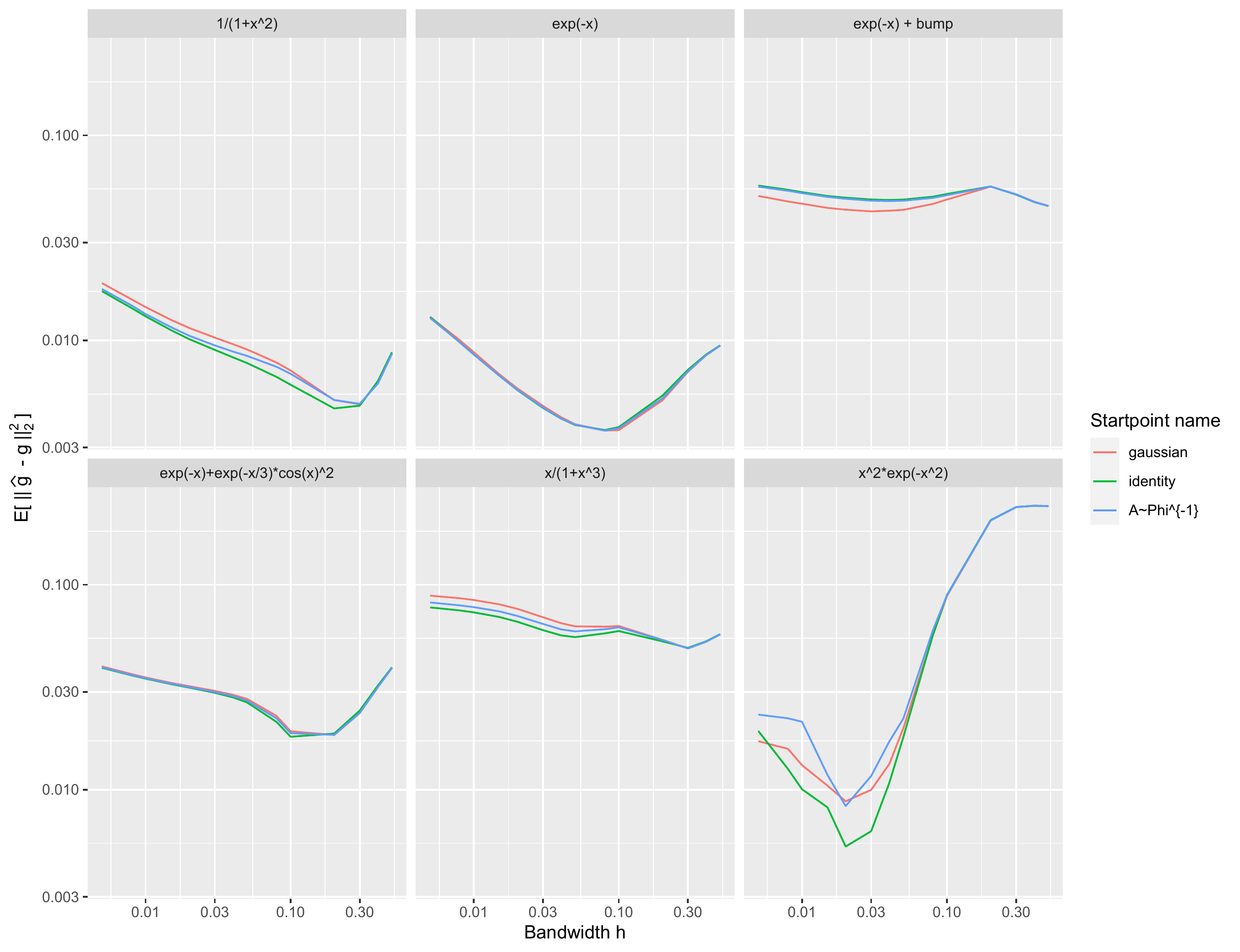}
    \caption{\textnormal{MISE of the estimates given by Algorithm~\ref{algo:basic_iteration_estimation_elliptical} for $n = 1000$, different choices of the generator $g$, the bandwidth $h$ and the initialization method.}}
    \label{fig:MSE_different_h_g}
\end{figure}

\begin{figure}
    \includegraphics[width = \textwidth]{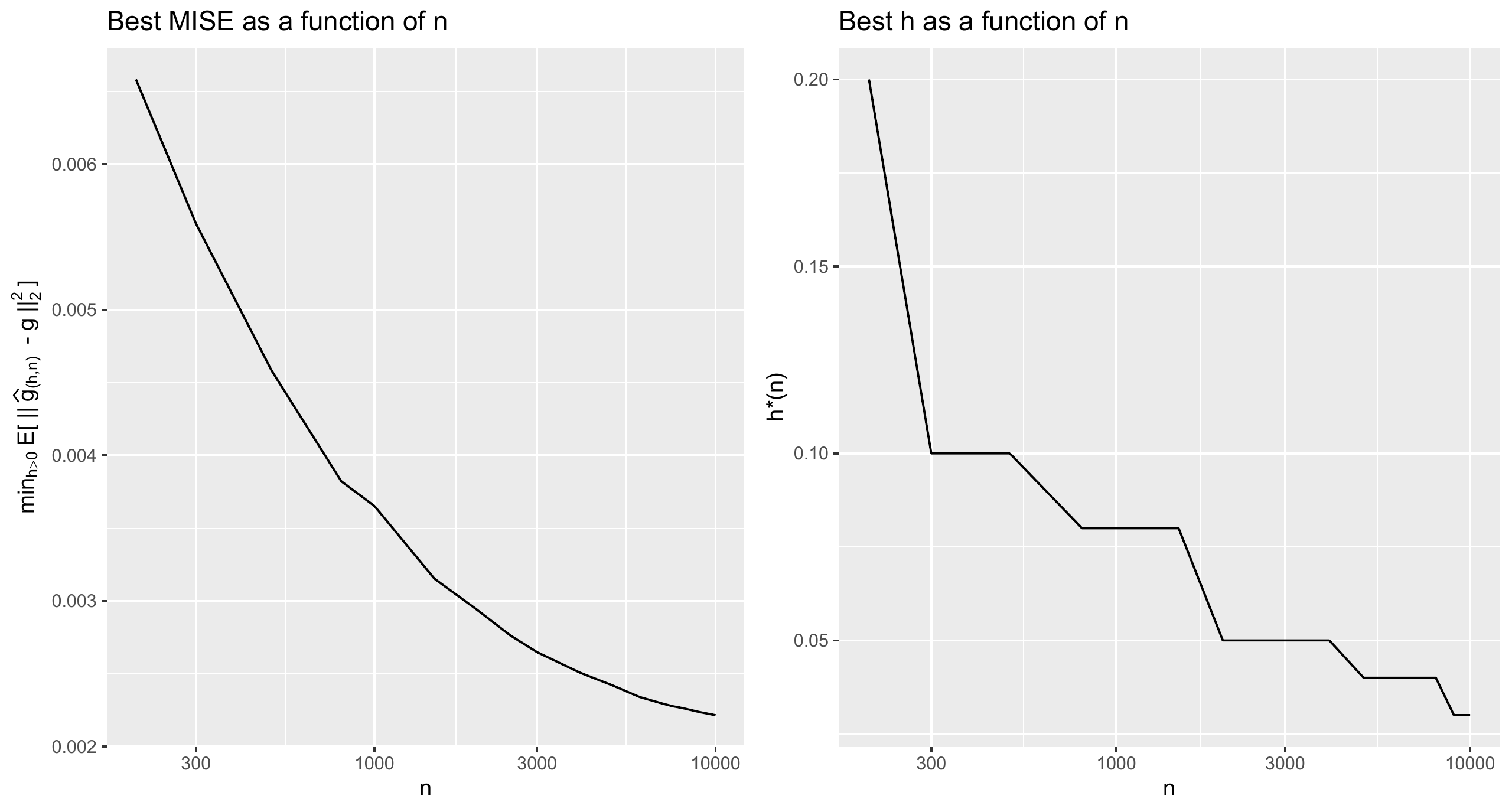}
   \caption{\textnormal{MISE of the estimate given by Algorithm~\ref{algo:basic_iteration_estimation_elliptical} as a function of $n$, for the best bandwidth $h=h^*(n)$, and $h^*(n)$ as a function of $n$. Both plots are in semi-log scale. The true generator is the Gaussian $g(x) = e^{-x}$ with the initialization ``identity''.}}
    \label{fig:MSE_different_h_n_V3}
\end{figure}

\subsection{Simulation study for higher dimensions}

Here, we consider the same sample size $n=1000$ as before, but the dimension $d$ varies between $3$ and $11$.
The correlation matrix is chosen as $\Sigma_{i,j} = 0.2$ when $i \neq j$. For the Gaussian generator, we then study the performance of our algorithm as a function of the tuning parameters $a$ and $h$. 
The results are displayed in Fig.~\ref{fig:MSE_a_h_d}.
We observe that the MSE increases with the dimension, even for the best choices of the tuning parameters $a$ and $h$.
When $d\geq 5$, avoid choosing $a$ less than one, while the influence of the bandwidth $h$ seems to be less crucial.

\mds

Computation times increase only slowly with the dimension $d$: see Fig.~\ref{fig:CompTime_dim}. This is because the generator is a 
univariate function regardless the dimension of the random vector. Therefore, most steps in our algorithms are invariant with respect to the dimension $d$, except the transformation of the sample $Y_i := \psi_a \big(\hat \Z_{i}^\top \,\hat \Sigmabf \, \hat \Z_{i} \big)$. The latter step costs at most $(4d^2+2d+3)n$ elementary operations, a reasonable amount when $d$ is moderate.

\mds 

\begin{rem}
\textnormal{
Actually, it is possible to bypass the problem of high dimensions $d$. 
Indeed, any subvector of an elliptical distribution is itself elliptically distributed (see Eq.~\eqref{eq:gen_subvector} and the related discussion).
As a consequence, if the copula of a random vector $\Y$ is elliptical with generator $g_d$, then the copula of a subvector $\Y_{(m)}$ of $m$ components of $\Y$ is still an elliptical copula whose generator $g_m$ is given by Eq.~\eqref{eq:gen_subvector}.
Therefore, it is possible to estimate the generator of an elliptical copula by using only a sample of $m$-dimensional subvectors. 
By a numerical inversion of Eq.~\eqref{eq:gen_subvector}, one would get a generator that corresponds to the copula of the whole vector $\Y$. 
}
\end{rem}

\begin{figure}
    \includegraphics[width = \textwidth]{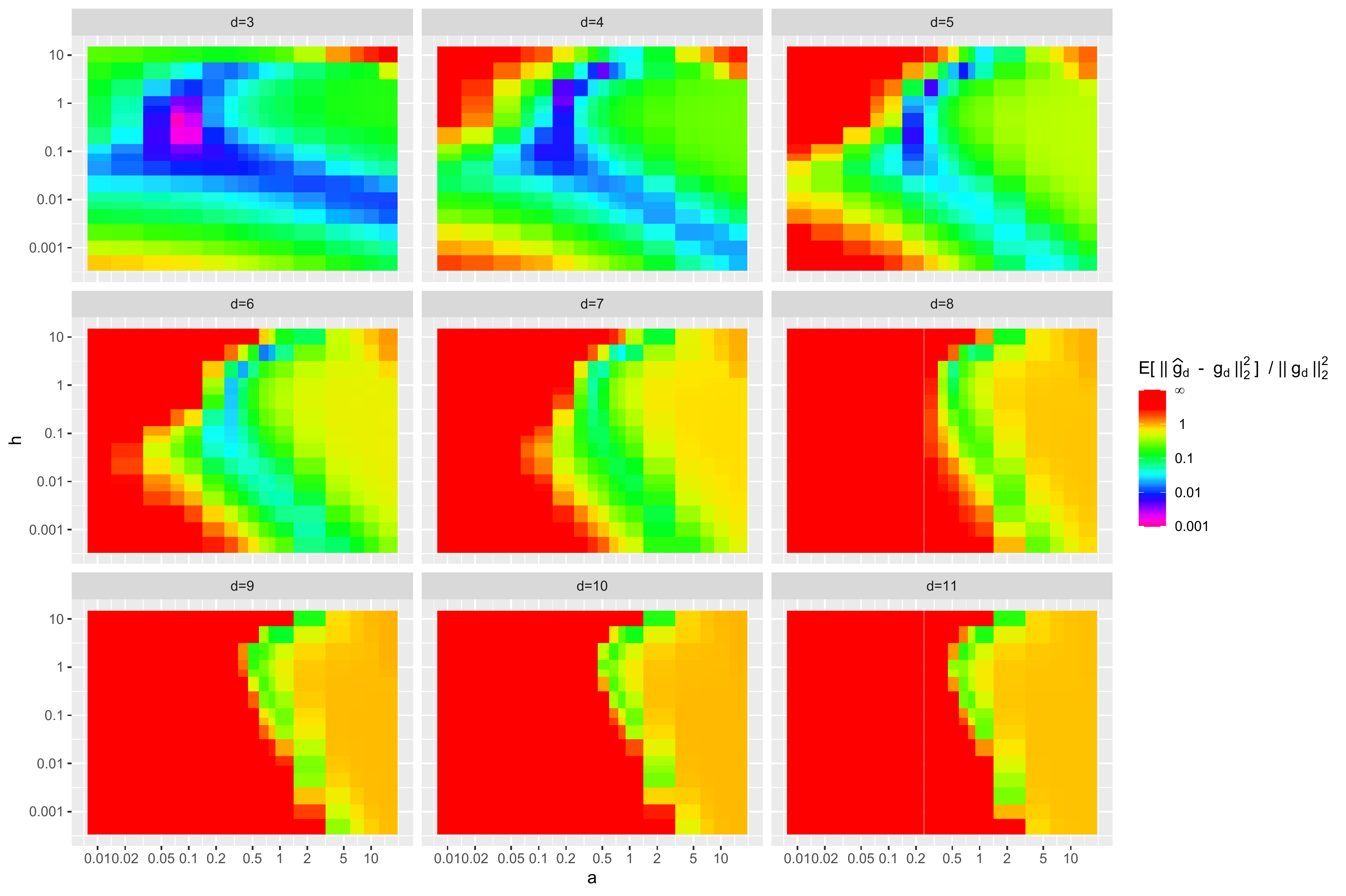}
   \caption{\textnormal{MISE of the estimate given by Algorithm~\ref{algo:basic_iteration_estimation_elliptical} as a function of $a, h$, for $n=1000$, and different choice of the dimension $d$. The true generator is the Gaussian $g(x) = e^{-x}$ with the initialization ``identity''.}}
    \label{fig:MSE_a_h_d}
\end{figure}

\begin{figure}[htb]
    \begin{minipage}[t]{0.442\textwidth}
        \begin{center}
 \includegraphics[width=\textwidth]{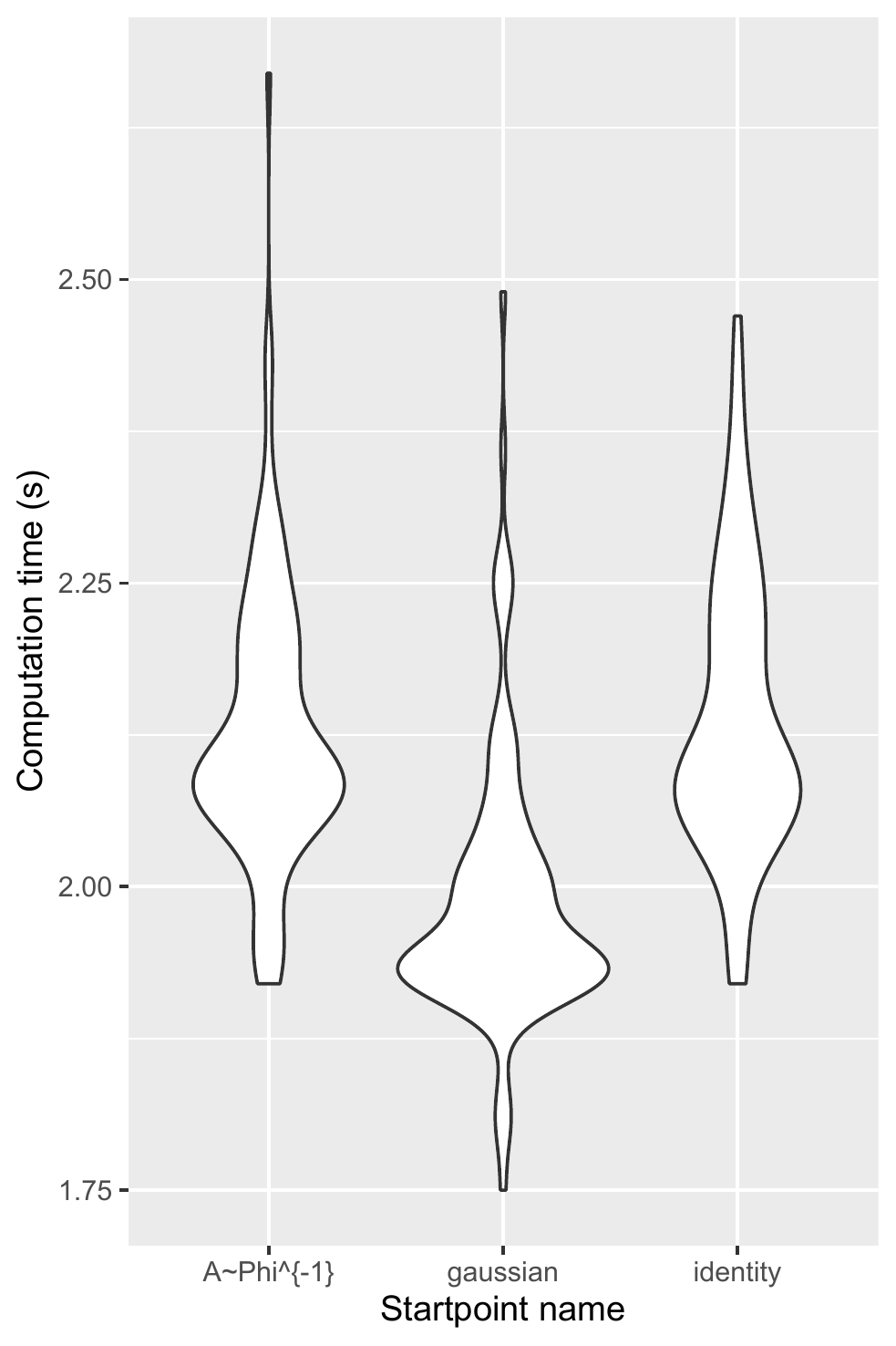}
 \caption{\textnormal{Density plot of the computation time as a function of the starting point, for $d=2$ and $n=1000$.}}
 \label{fig:CompTime_startPoint}
 \end{center}
    \end{minipage}
    \hfill
    \begin{minipage}[t]{0.53\textwidth}
       \begin{center}
 \includegraphics[width=\textwidth]{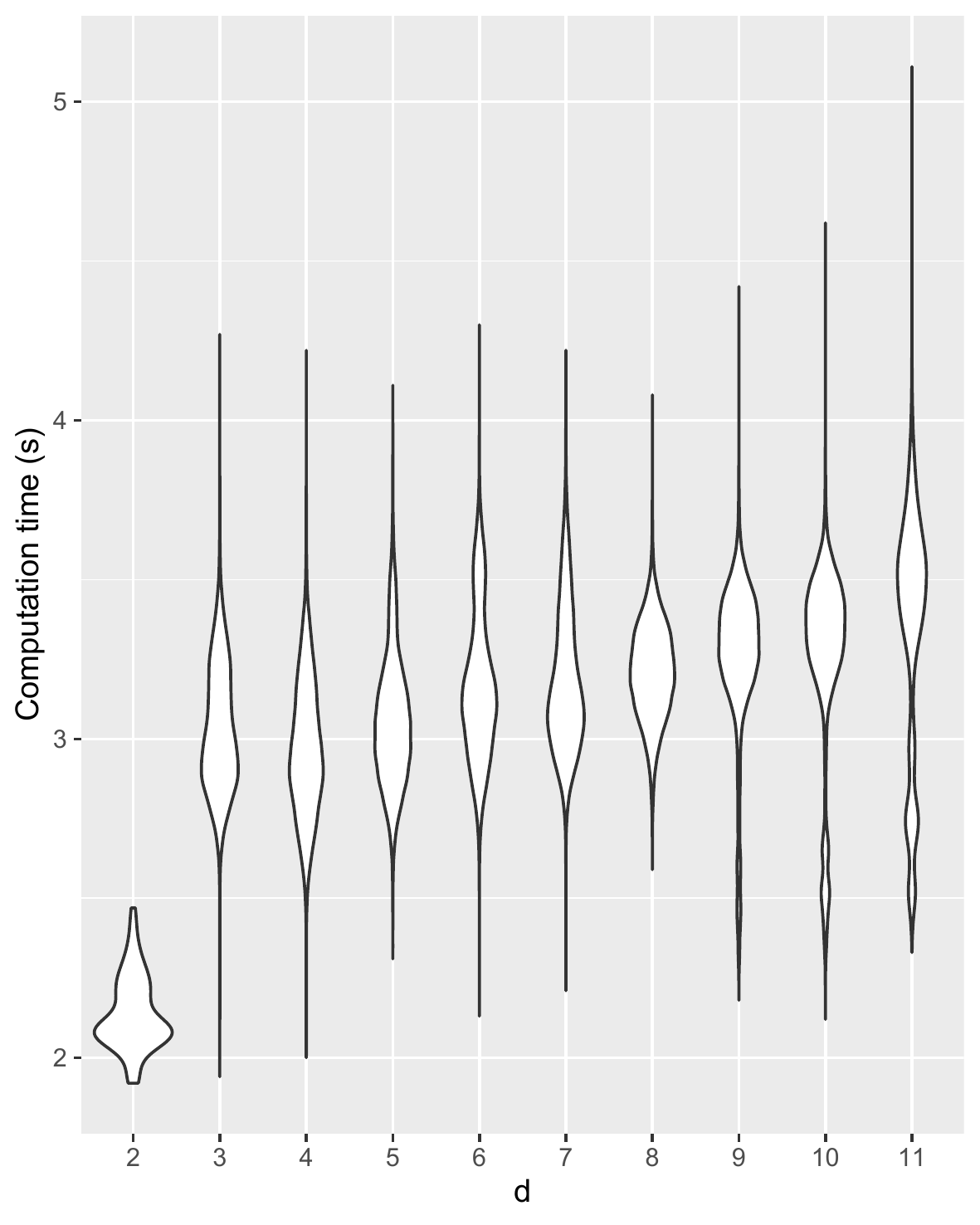}
 \caption{\textnormal{Density plot of the computation time as a function of the dimension, for the initialization method ``identity'' and $n=1000$.}}
 \label{fig:CompTime_dim}
 \end{center}
    \end{minipage}
\end{figure}

\subsection{When \texorpdfstring{$\Sigmabf$}{Sigma} is non exchangeable and almost non invertible}

We consider the same setting as in the previous section, and we aim for measuring the effect of the lack of \mbox{exchangeability} in the matrix $\Sigmabf$ on the estimation of $g$.
For this, two different frameworks are considered.
\begin{enumerate}
    \item[(a)] First framework: the dimension is $d=3$, $h=a=0.2$ and the correlation matrix is
    $\Sigmabf_{(3)}(\rho_{12}) = \begin{pmatrix}
    1 & \rho_{12} & 0.2 \\
    \rho_{12} & 1 & 0.2 \\
    0.2 & 0.2 & 1
    \end{pmatrix}$.
    
    \item[(b)] Second framework: the dimension is $d=10$, $a=1$, $h=0.1$ and the correlation matrix is
    \begin{align*}
        \Sigmabf_{(10)}(\rho_{12}) := \begin{pmatrix}
        1         & \rho_{12} & \rho_{12} & 0.2    & \cdots & 0.2    \\
        \rho_{12} & 1         & \rho_{12} & 0.2    & \cdots & 0.2    \\
        \rho_{12} & \rho_{12} & 1         & 0.2    & \cdots & 0.2    \\
        0.2       &  0.2      & 0.2       & 1      & \ddots & \vdots \\
        \vdots    &  \vdots   & \vdots    & \ddots & \ddots &  0.2   \\
        0.2       &  0.2      & 0.2       & \cdots & 0.2    &  1
        \end{pmatrix}
    \end{align*}
\end{enumerate}

Note that there exists a value $\underline{\rho_{12}}$ for which
$\Sigmabf_{(3)}(\rho_{12})$ and $\Sigmabf_{(10)}(\rho_{12})$ are positive semi-definite if and only if $\rho_{12}> \underline{\rho_{12}}$.
For the first and the second frameworks, $\underline{\rho_{12}} \approx -0.919$ and $\underline{\rho_{12}} \approx -0.432$ respectively.

\medskip

The MISE is computed for both frameworks as a function of $\rho_{12}$ and is displayed respectively in Fig.~\ref{fig:MSE_dim3_rho12} and~\ref{fig:MSE_dim10_rho12}. On these figures, the MISE stays stable whenever $\rho_{12}$ is not too close to the boundary value $\underline{\rho_{12}}$. When $\rho_{12}$ is close to $\underline{\rho_{12}}$, the estimator of the correlation matrix becomes unreliable, degrading the performance of the estimator $\hat g$. This deterioration is stronger in the second framework where $d=10$. Note that, as in the previous section, the performance for $d=10$ is always worse than for $d=3$ even far away from the boundary.

\begin{figure}[htb]
    \begin{minipage}[t]{0.48\textwidth}
        \begin{center}
 \includegraphics[width=\textwidth]{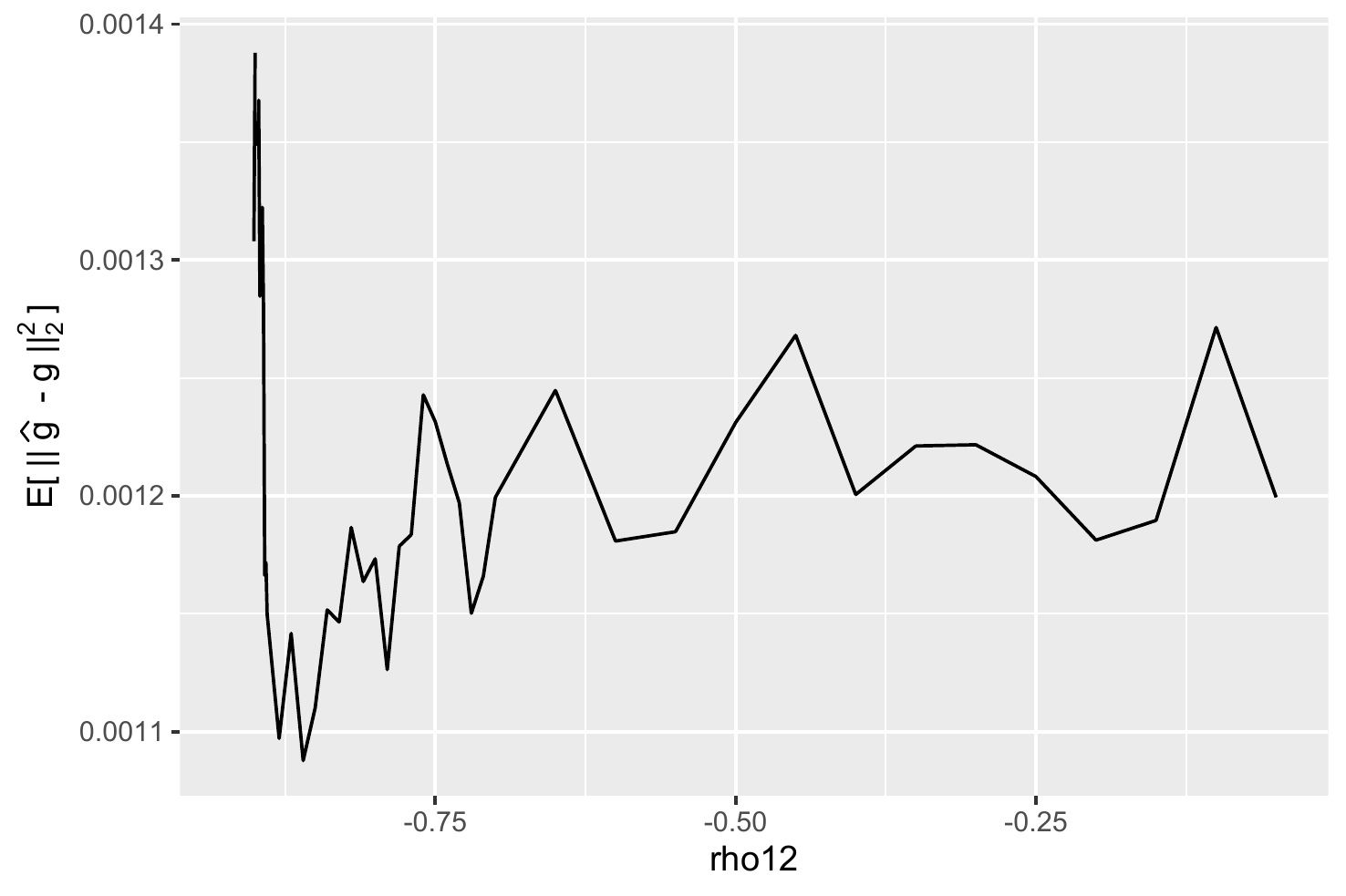}
 \caption{\textnormal{MISE of the estimate given by Algorithm~\ref{algo:basic_iteration_estimation_elliptical} as a function of $\rho_{12}$ for $d=3$ and $n=1000$, where the correlation matrix is $\Sigmabf_{(3)}(\rho_{12})$.}}
 \label{fig:MSE_dim3_rho12}
 \end{center}
    \end{minipage}
    \hfill
    \begin{minipage}[t]{0.48\textwidth}
       \begin{center}
 \includegraphics[width=\textwidth]{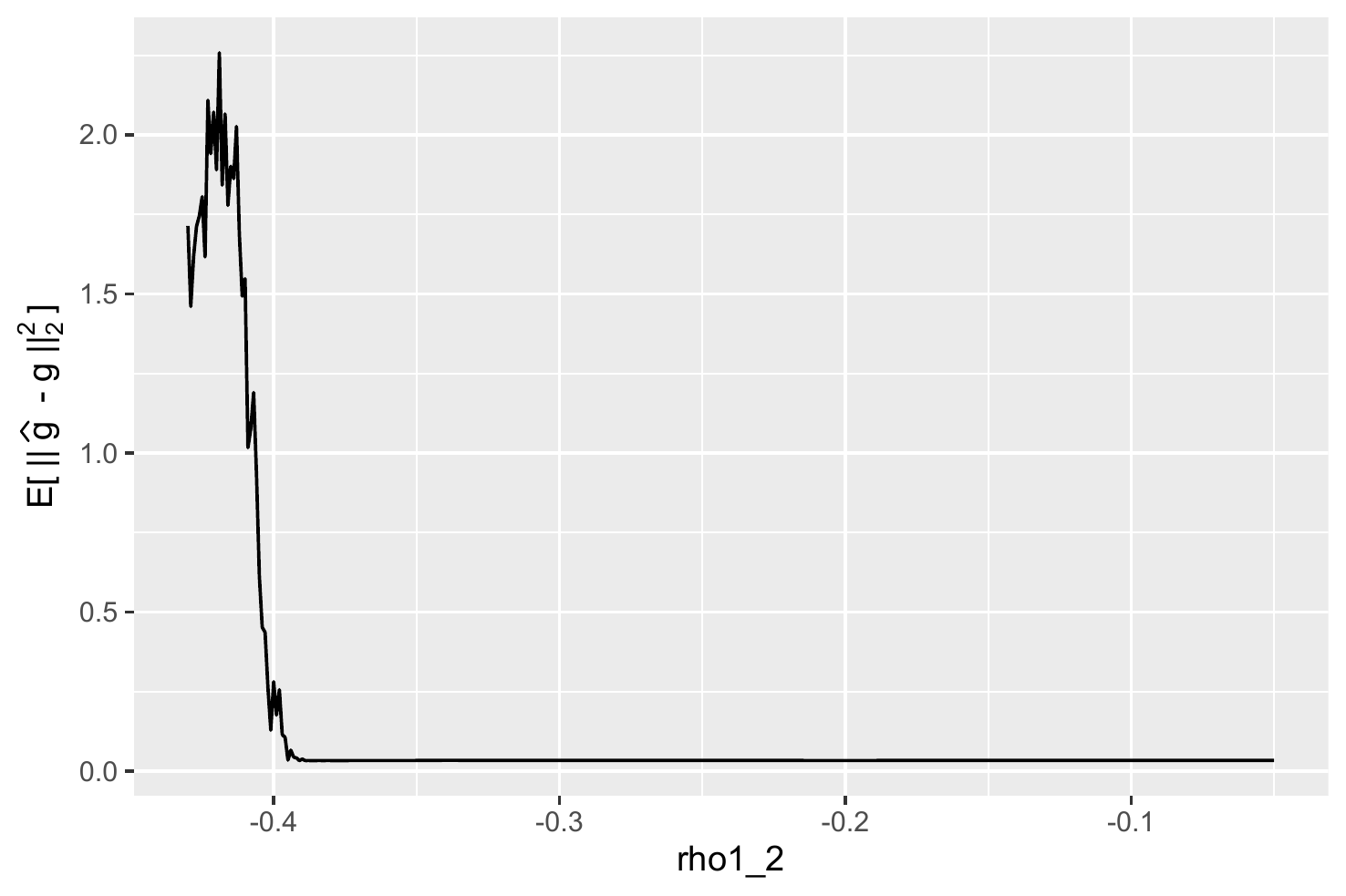}
 \caption{\textnormal{MISE of the estimate given by Algorithm~\ref{algo:basic_iteration_estimation_elliptical} as a function of $\rho_{12}$ for $d=10$ and $n=1000$, where the correlation matrix is $\Sigmabf_{(10)}(\rho_{12})$.}}
 \label{fig:MSE_dim10_rho12}
 \end{center}
    \end{minipage}
\end{figure}

\subsection{Simulation study with missing values}

In this case, we choose $n=1000$, $d=3$, and the same correlation matrix as before.
Contrary to the previous simulation experiments, we introduce some missing values in the dataset (represented by $\NA$ in the~\texttt{R} environment). 
The missing values are generated as follows:
we fix a number $N_{missing}$ of observations that are affected by missing values; we randomly draw a number $N_{missing,1}$ of observations (uniformly between $0$ and $N_{missing}$) for which a single component is missing; let $N_{missing,2} := N_{missing} - N_{missing,1}$ be the number of observations for which two components are missing. Because $d=3$, considering the case of three missing components does not make sense because this would induce an empty vector (which should rather induce a smaller sample size rather than a missing value issue if it were the case). We select $N_{missing,1}$ (respectively $N_{missing,2}$) observations at random, and replace 
the missing values by $\NA$.

\mds

For the estimation procedure, we choose $a = 0.08$ and $h=0.2$, the optimal choices according to Fig.~\ref{fig:MSE_a_h_d}.
The corresponding MISE are displayed in Fig.~\ref{fig:MSE_missing}. They show that the MISE deteriorates as the number of missing values increases, but this tendency is not very strong.
\begin{figure}[hbt]
    \centering
    \includegraphics[width = 12cm]{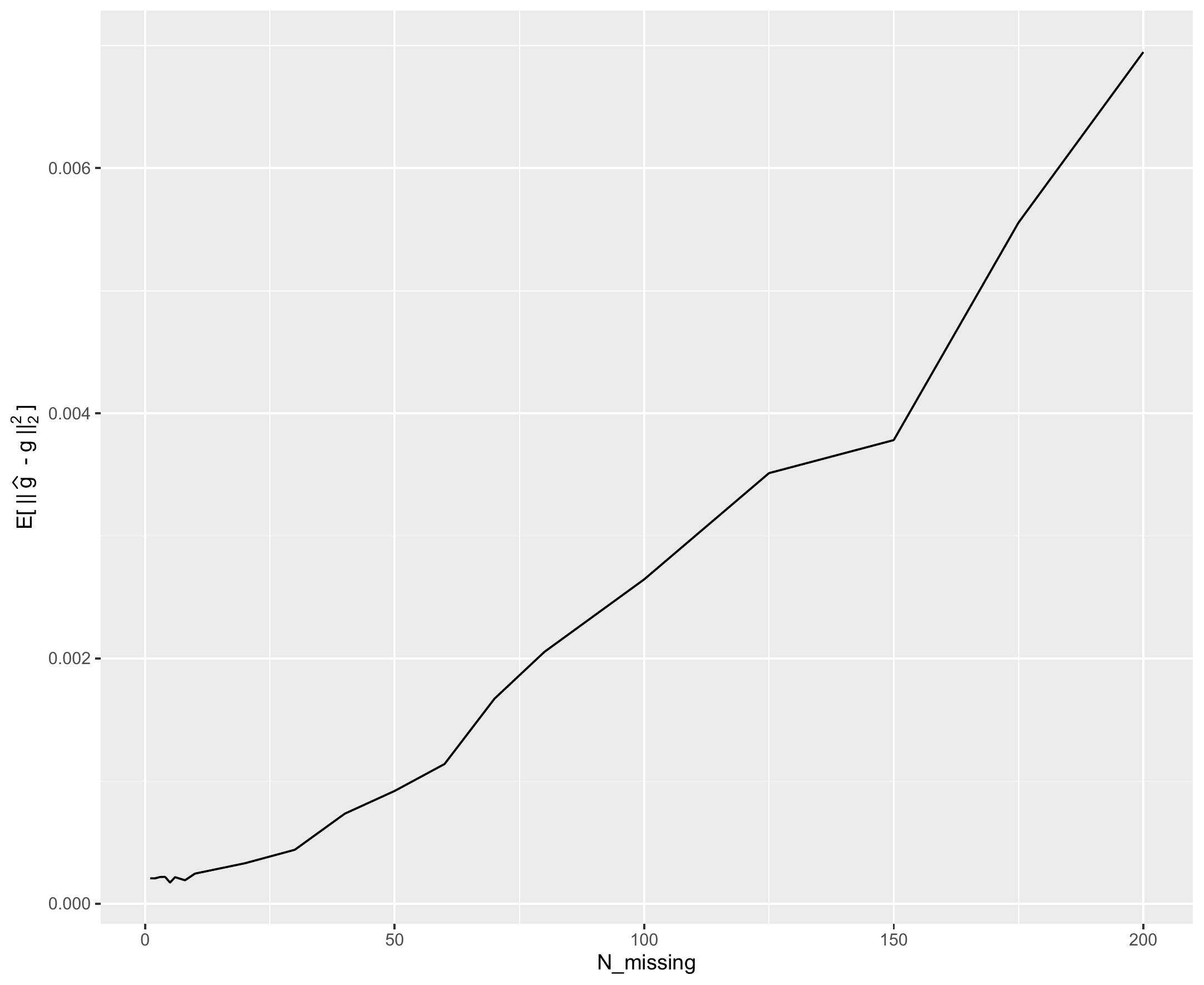}
   \caption{\textnormal{MISE of the estimate given by Algorithm~\ref{algo:basic_iteration_estimation_elliptical} as a function of the number of observations that contains missing values. The sample size is $n=1000$, the dimension is $d=3$ and the true generator is the Gaussian $g(x) = e^{-x}$ with the initialization ``identity''.}}
    \label{fig:MSE_missing}
\end{figure}


\section{Conclusion}
\label{conclusion}

We have stated some sufficient conditions to obtain the identifiability of the generator $g$ of a meta-elliptical copula. 
In many standard practical situations, they are satisfied, particularly when the corresponding correlation matrix $\Sigmabf$ is not the identity matrix.
Some inference procedures have been discussed, in particular an iterative method called MECIP that yields satisfying empirical results. 

\mds 

Among the avenues for further studies, a theoretically sound data-driven bandwidth selector would be welcome. 
\textcolor{black}{The theoretical properties of the iterative algorithm  (consistency, rate of convergence) remains unknown, even 
if the procedure seems to behave conveniently in our experiments. The proof of such results would be particularly challenging, due to the highly nonlinear analytical features of the maps we use in our algorithm MECIP.}
Moreover, it would be nice to weaken the conditions for the identifiability of $g$ when $\Sigmabf=\I_d$ (Proposition~\ref{identif_cop_ellip_Id}).
Finally, we conjecture that our results apply even when $g(0)=0$, a situation that was excluded for the sake of clarity in the theoretical developments, but that seems to be conveniently managed in our numerical experiments.

\bigskip

\noindent
{\bf Acknowledgements}
\noindent

\mds
Jean-David Fermanian has been supported by the labex Ecodec (reference project ANR-11-LABEX-0047).

\bigskip


\bibliographystyle{myjmva}
\bibliography{biblio}

\bigskip

\appendix

\section{A reminder about elliptical random vectors}
\label{marginals_elliptical}

Let $\X$ be a $d$-dimensional elliptical random vector, $\X\sim \Ec_d(\mubf,\Omegabf,g_d)$. When there is no ambiguity, $g_d$ will simply be denoted $g$. 
We recall a key representation of any elliptical random vector $\X$ in $\Rb^d$, that can be considered as a definition.

\begin{defi}[polar decomposition]
\textnormal{
\label{equiv_repres_ellipt}
    A random vector $\X$ follows an elliptical distribution $\Ec_d(\mubf,\Omegabf,g)$ if
    $\X \indistr{\mubf + R \A^\top \V}$, for some $d\times d$ matrix $\A$ such that $\A^\top \A=\Omegabf$, where $\V$ is a random vector uniformly distributed on the $d$-dimensional unit sphere and $R$ is
    a nonnegative r.v. independent of \textcolor{black}{$\V$}. When $R$ has a density $g_R$ with respect to the Lebesgue measure on $\Rb^+$ and no mass at zero, this is the case for $\X$ too.
    The density $f_{\X}$ of $\X$ is then given by 
        \begin{equation*}
    f_{\X}(\x) = {|\Omegabf|}^{-1/2} g\left( (\x-\mubf)^\top \, \Omegabf^{-1} \, (\x-\mubf) \right),\;\x \in \Rb^d,
    \end{equation*}
    and we have
    \begin{equation}
    g_R(r) = s_d r^{d-1} g(r^2), \; r\geq 0.
    \label{dens_R}
    \end{equation}
}
\end{defi}
See~\cite{gomez2003_survey}, Theorem 3, and~\cite{cambanis1981_theory}, Section 4.
We denote by $\X \sim \tilde\Ec_d(\mubf, \A, g_R)$ the latter ``polar decomposition''. It is said that $R$ is the modular variable of $\X$, and its density $g_R$ is the associated modular
density.
In our paper, we assume that $\X$ has a density $f_{\X}$ with respect to the Lebesgue measure on $\Rb^d$.
The support of $f_{\X}$ is $\{\x ; \sum_{j=1}^d x_j^2 < M\}$ for some $M\in \bar \Rb^+$ (\cite{kelker1970_distribution}, p.422).

\mds

The associated vector $\Y=\Omegabf^{-1/2}(\X-\mubf)$ is spherical, $\Y\sim\Ec_d(\0,\I_d,g_d)$.
It is well-known that sub-vectors of $\X$ (resp. $\Y$) are elliptical (resp. spherical) too: see Theorem 6 of Gomez et al. \cite{gomez2003_survey} (density generator),
 Embrechts et al. (p.10) \cite{embrechts2002_correlation} (characteristic generator), Theorem 1 of Kano \cite{kano1994_consistency}, among others.
\textcolor{black}{
See the recent surveys~\cite{babic2019comparison, paindaveine2014elliptical} about elliptical and meta-elliptical distributions.
}

\mds

Actually, our two parameterizations of elliptical distributions above are essentially unique.
\begin{prop} Let $\X$ be a $d$-dimensional random vector.
    \begin{itemize}
    \item[(a)]
    If $\X\sim \tilde\Ec_d(\mubf_1,\A_1,g_{R}^{(1)})$ and $\X\sim \tilde\Ec_d(\mubf_2,\A_2,g_{R}^{(2)})$, then there exists a constant $a>0$ such that
    $ \mubf_1=\mubf_2$, $ \A_1^\top \A_1= a \A_2^\top \A_2$, $ R_1 \indistr{a^{-1/2} R_2}$,    $g_{R}^{(1)}(t)= \sqrt{a}\, g_{R}^{(2)}(t\sqrt{a} ),$
    for almost every $t$.
    \item[(b)]
    If $\X\sim \Ec_d(\mubf_1,\Omegabf_1,g^{(1)})$ and $\X\sim \Ec_d(\mubf_2,\Omegabf_2,g^{(2)})$, then there exists a constant $a>0$ such that
    $ \mubf_1=\mubf_2$, $\Omegabf_1= a \Omegabf_2$, and $g^{(1)}(t)= a^{d/2} g^{(2)}(at),$
    for almost every $t$.
    \end{itemize}
    \label{prop:ellipt_distr_ident}
\end{prop}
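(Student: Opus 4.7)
I would prove the two parts in parallel since they share the same mechanism, through three steps: establish $\mubf_1 = \mubf_2$; show that the shape matrices are proportional; and then derive the scaling law on the generator. For the first step, the law of $\X - \mubf_i$ is symmetric about $\0$ under either parameterization: in (b) because $f_\X(\mubf_i + \y) = f_\X(\mubf_i - \y)$ by the form of the density, and in (a) because $\V_i \indistr -\V_i$ as $\V_i$ is uniform on the unit sphere. Thus the law of $\X$ is symmetric about both $\mubf_1$ and $\mubf_2$, so the composition of the two reflections leaves it invariant under the translation by $2(\mubf_2 - \mubf_1)$; if this vector were nonzero, $f_\X$ would be a nontrivial periodic function, contradicting $\int f_\X = 1$.

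For the second step, I would rely on the classical fact that every elliptical distribution admits a characteristic function of the form $\phi_{\X - \mubf}(t) = \psi_i(t^\top \Omegabf_i t)$ for some continuous ``characteristic generator'' $\psi_i : \Rb^+ \to \Rb$ with $\psi_i(0) = 1$, where $\Omegabf_i := \A_i^\top \A_i$ in (a) or the given shape matrix in (b). Matching the two representations yields $\psi_1(t^\top \Omegabf_1 t) = \psi_2(t^\top \Omegabf_2 t)$ for every $t$. Fixing a reference direction $v_0$ and setting $\lambda(v) := v^\top \Omegabf_2 v / v^\top \Omegabf_1 v > 0$, restricting to $t = s v_0$ and then to $t = s v$, and eliminating $\psi_1$ between the two, gives $\psi_2(\lambda(v_0) u) = \psi_2(\lambda(v) u)$ for every $u \geq 0$ and every $v \neq 0$. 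The crux is to rule out $\lambda(v) \neq \lambda(v_0)$: since $\X$ has a density, Riemann--Lebesgue yields $\psi_2(u) \to 0$ as $u \to \infty$; if $c := \lambda(v)/\lambda(v_0) > 1$, iterating $\psi_2(u) = \psi_2(c^n u)$ would force $\psi_2 \equiv 0$ on $(0,\infty)$, contradicting continuity and $\psi_2(0) = 1$ (the case $c<1$ being symmetric by pushing towards $0$). Hence $\lambda(v)$ is constant in $v$, and by polarization this yields $\Omegabf_1 = a \Omegabf_2$ for some $a > 0$.

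Step three is then a direct substitution. In (b), plugging $\Omegabf_1 = a \Omegabf_2$ into the density equality and using $|\Omegabf_1| = a^d |\Omegabf_2|$ together with $\Omegabf_1^{-1} = \Omegabf_2^{-1}/a$ yields $a^{-d/2} g^{(1)}(t/a) = g^{(2)}(t)$ for almost every $t \in \Rb^+$, whence $g^{(1)}(t) = a^{d/2} g^{(2)}(at)$. In (a), observe that $(\X - \mubf)^\top \Omegabf_i^{-1} (\X - \mubf) = R_i^2 \V_i^\top \A_i \Omegabf_i^{-1} \A_i^\top \V_i = R_i^2$ almost surely (using $\A_i^\top \A_i = \Omegabf_i$ and $\|\V_i\|=1$), so $\Omegabf_1 = a \Omegabf_2$ forces $R_1^2 = R_2^2/a$, i.e.\ $R_1 \indistr a^{-1/2} R_2$, and the change-of-variables formula on $\Rb^+$ gives $g_R^{(1)}(t) = \sqrt{a}\, g_R^{(2)}(t\sqrt{a})$. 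I expect the main obstacle to lie in step two: it is precisely the assumed existence of a density (through the decay of $\psi_2$ at infinity) that turns the formal scaling relation into a rigid identification of $\Omegabf$ up to a positive scalar.
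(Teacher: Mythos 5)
Your proof is correct, but it takes a genuinely different route from the paper's. The paper disposes of part (a) by citing Theorem 3(ii) of Cambanis, Huang and Simons (1981) and then obtains part (b) by reducing it to (a) through the polar decomposition and the relation $g_R(r)=s_d r^{d-1}g(r^2)$; the only computation it actually carries out is the two-fold application of that relation to convert $g_R^{(1)}(t)=\sqrt{a}\,g_R^{(2)}(t\sqrt{a})$ into $g^{(1)}(t)=a^{d/2}g^{(2)}(at)$. You instead give a self-contained argument: the location is pinned down by composing the two point symmetries into a translation that no probability measure can tolerate; the shape matrix is pinned down up to a scalar by matching characteristic generators and using the decay of $\psi_2$ at infinity (Riemann--Lebesgue, available since $\X$ has a density) to kill the scaling invariance $\psi_2(u)=\psi_2(cu)$ unless $c=1$; and the generator relations follow by substitution in (b) and by the identity $(\X-\mubf)^\top\Omegabf_i^{-1}(\X-\mubf)=R_i^2$ in (a). In effect you re-derive the content of the cited Cambanis et al.\ result rather than invoking it, and you prove (b) directly rather than via (a). What your version buys is transparency about where nondegeneracy enters (positive definiteness of $\Omegabf_1$ to define $\lambda(v)$, invertibility of $\A_i$ for $\A_i\Omegabf_i^{-1}\A_i^\top=\I_d$, and the existence of a density for the decay of $\psi_2$ --- all covered by the paper's standing assumptions); what the paper's version buys is brevity and a clean reduction of the density parameterization to the polar one. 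Both are valid.
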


\begin{proof}[\bf{Proof of Proposition~\ref{prop:ellipt_distr_ident}}]
Point $(a)$ is a consequence of Theorem 3 (ii) in \cite{cambanis1981_theory}. The proof of point $(b)$ is deduced from Proposition~\ref{equiv_repres_ellipt}: for $k\in \{1,2\}$, there exists
$(\A_k,g_{R^{(k)}})$ such that $\Ec_d(\mubf_k,\Omegabf_k,g^{(k)})\indistr \tilde \Ec_d(\mubf_k,\A_k,g_{R}^{(k)})$, and then apply point $(a)$. In particular, applying Eq.~(\ref{dens_R}) twice, we get
$$ s_d t^{(d-1)/2}g^{(1)}(t)=g_{R}^{(1)}(\sqrt{t})=\sqrt{a} g_{R}^{(2)}(\sqrt{at})
= s_d (at)^{(d-1)/2} \sqrt{a} g^{(2)}(at),$$
for every $t$, proving the asserted relationship between $g^{(1)}$ and $g^{(2)}$.
\end{proof}

\mds
As a consequence, in order to get unique sets of $\X$-parameters, it is necessary to impose an additional identification constraint.
Obviously, $\mubf=E[\X]$ is uniquely defined.
Moreover, due to Proposition~\ref{prop:ellipt_distr_ident}, $\Omegabf$ (resp. $\A$) is defined up to a positive constant.
Typically, when $\X$ has finite second moments, it is natural to impose $\text{Cov}(\X)=\Omegabf = \A^\top \A$.
Otherwise, it is still possible to impose $\text{Tr}(\Omegabf)=1$ or other similar constraints.
Once $\Omegabf$ is uniquely defined, this will be the case for $g$, as deduced from Proposition~\ref{prop:ellipt_distr_ident}.
In other words, the usual constraint
\begin{equation*}
    s_d\int_0^{\infty} r^{d-1} g(r^2)\, dr =s_d\int_0^{\infty} t^{d/2-1} g(t)\, dt/2 =1,
\end{equation*}
where $s_d:=2 \pi^{d/2}/\Gamma(d/2)$
plus a single additional constraint imply the identifiability of the law of any elliptically distributed random vector.

\mds

Let us specify how subvectors of an elliptically distributed vector are still elliptical, recalling Cambanis et al.~\cite{cambanis1981_theory} (Section 4) or~\cite{FangKotzNg} (Eq. (2.23)): if $\Y\sim \Ec_d(\0,\I_d,g_d)$, then the subvector $\Y_{(m)}$ of the first $m$
components of
$\Y_{(d)}$, $m< d$ is still spherical, i.e., $\Y_{(m)}\sim \Ec_m(\0,\I_m,g_m)$, where
\begin{align}
    g_m(u)= s_{n-m}\int_0^{\infty} g_d(u+r^2) r^{d-m-1} \, dr,
    \label{eq:gen_subvector}
\end{align}
where $s_k$ denotes the surface of the $k$-dimensional unit sphere in $\Rb^d$, i.e., $s_k= 2\pi^{k/2}/\Gamma (k/2)$, $k\geq 1$.
We deduce
\begin{equation}
 g_1(u)= \frac{\pi^{(d-1)/2}}{\Gamma((d-1)/2)}\int_0^{\infty} g_d(u+s) s^{(d-3)/2} \, ds.
 \label{margin_ellip}
\end{equation}
Note that $X_{k}\sim \Ec(0,\Omega_{kk},g_{1})$ for any $k\in \{1,\ldots,d\}$, where $\Omegabf=:[\Omega_{k,l}]$ and $g_1$ as above (\cite{gomez2003_survey}, Th. 6).
Therefore, its density is $f_k(t)=g_1(t^2/\Omega_{kk})/\sqrt{\Omega_{kk}}$ for every real $t$.
When $\Omega_{kk}=1$, as in the case of correlation matrices $\Omegabf$, the density of any margin of $\Ec_d(\0,\Omegabf,g)$ is
\begin{equation}
f_g(t):=g_1(t^2)=\frac{s_{d-1}}{2}\int_0^{\infty} g(t^2+s) s^{(d-3)/2} \, ds=
s_{d-1}\int_{0}^{\infty} g(t^2+r^2) r^{d-2} \, dr.
\label{margin_ell_reduced}
\end{equation}
Clearly, $f_g$ is even. Denote by $a_g^2 \in \bar \Rb^+$ the upper bound of $g$'s support (possibly equal to $+\infty$).
Then, it is easy to see that the support of $f_g$ is $(- a_g, a_g)$, possibly including its boundaries.
When $g$ is bounded in a neighborhood of a positive real number $t$, then $g_1$ is continuous at \textcolor{black}{$t$} too (\cite{kelker1970_distribution}, Lemma 3).
As a consequence, $f_g$ is continuous at $\pm \sqrt{t}$.
Finally, Theorem 6 in Gomez et al. \cite{gomez2003_survey} provides similar relationships in terms of modular variables.



\mds

Let us discuss the nonparametric inference of density generators.
Assume we observe an i.i.d. sample $(\X_1,\ldots,\X_n)$ drawn along $\X\sim \Ec_d(\mubf,\Sigmabf,g)$ for some correlation matrix $\Sigmabf$.
The statistical estimation of the underlying parameters has been studied in the literature.
First, it is easy to evaluate $\mubf$ by $\hat\mubf$, the empirical mean of the vectors $\X_i$, $i\in \{1,\ldots,n\}$. 
Second, an estimator $\hat\Sigmabf$ of $\Sigmabf$ can be obtained by empirical Kendall's taus'.
Now, set the standardized vector $\Y := \Sigmabf^{-1/2} \, (\X-\mubf)$, and the standardized observations $\hat\Y_i := \hat\Sigmabf^{-1/2} \, 
(\X_i-\hat\mubf)$, $i\in \{1,\ldots,n\}$.
Note that the latter observations are identically distributed but not independent.

\mds

Third, we know that the law of $\Y$ is spherical, $\Y\sim \Ec_d(\0,\I_d,g)$, and depends on $g$ only.
The task of estimating $g$ is relatively challenging and a few techniques have been proposed in the literature.
In  Battey and Linton~\cite{battey2014nonparametric}, the estimation of the density generator is done by a finite mixture sieve.
Bhattacharyya \cite{bhattacharyya2013_study} proposed piecewise constant  estimators of $g$, that are fitted by log-linear splines.
In \cite{BhattacharyyaBickeL_adaptive}, they also provided an EM-algorithm to estimate the parameters of elliptical distributions mixtures.
Stute and Werner \cite{stute1991_nonparametric} introduced a usual kernel density estimator of $\| \Y \|^2$'s law. Since $\|\Y \|^2=R^2$ through the polar decomposition of $\X$, and
invoking~(\ref{dens_R}), they deduced the following estimator of the $\Y$ density:
\begin{equation}
f_n(\y) = \frac{2}{s_d \, n \, h_n \, {||\y||}^{d-2}} \sum_{i=1}^{n}
K\left( \frac{{||\y||}^2 - {||\hat\Y_i||}^2}{h_n}   \right), \; \y\in \Rb^d,
\end{equation}
{\color{black}for a bandwidth $h_n > 0$ and a one-dimensional kernel $K(\cdot)$.}
Since $f_Y(\y)=g(\| \y\|^2)$, an estimate of the density generator $g$ itself is 
\begin{equation}
\hat g(u)  = \frac{2}{s_d n \, h_n \, u^{(d-2)/2}} \sum_{i=1}^{n}
K\left( \frac{u - {||\hat\Y_i||}^2}{h_n}   \right), \; u\in \Rb^+.
\label{estim_g_WernerStute}
\end{equation}

\mds

These ideas have been refined by
Liebscher \cite{liebscher2005_semiparametric} and applied by \cite{pimenova2012_semiparametric}, notably. He noticed that the estimator~(\ref{estim_g_WernerStute}) does not behave
conveniently close to the origin.
He proposed to nonlinearly transform the data before invoking a kernel estimator, to avoid convergence problems in boundary regions.

\section{Finite distance properties of \texorpdfstring{$\hat\theta_{n,m}$}{\^theta\_\{n,m\}}}
\label{prop_thetahat_finite_dist}

To this end, introduce the pseudo-true value of the parameter
$\theta_m^*$. It is the ``best'' value of the parameter when the model is assumed to belong to $\Gc_m$: generally speaking, for a divergence 
$D(\cdot,\cdot)$ between distributions on $\Rb^d$, define
\begin{equation}
\theta_m^*:= \underset{\theta: \, g_\theta \in \Gc_m}{\arg \min} \, D\big(\Pb_0,\Pb_{g_\theta,\Sigmabf,F_1,\ldots,F_d}\big),
\end{equation}
where $\Pb_0$ is the law of the observations (the true DGP) and $\Pb_{\bar g,\bar\Sigmabf,\bar F_1,\ldots,\bar F_d}$ is the law induced by 
a Trans-elliptical distribution $\Tc\Ec_d( \bar g, \bar\Sigmabf,\bar F_1,\ldots,\bar F_d)$.
Here, we set
$$ D\big(\Pb_0,\Pb_{g_\theta,\Sigmabf,F_1,\ldots,F_d}\big) := E\big[\Gb_n(\theta,\Uc)\big],$$
and we assume $\theta_m^*$ satisfies the first-order condition $E\big[\nabla_\theta\Gb_n(\theta_m^*,\Uc)\big]=0$.

\mds

\begin{cond}
\textnormal{
\label{H1}
The pseudo-true parameters are ``sparse'': $card(\Ac_m) = k_m < p_m$ and $\Ac_m = \{i: \theta^*_{m,i} \neq 0\}$.
}
\end{cond}

\begin{cond}
\textnormal{
\label{amenable}
We consider coordinate-separable penalty (or regularizer) functions  $\pp_n: \Rb_+ \times \Rb^{p_n} \rightarrow \Rb$, i.e.,
\textcolor{black}{$\pp_n(\lambda_n,\theta) = \overset{p_n}{\underset{k = 1}{\sum}} p(\lambda_n,\theta_k)$}.
Moreover, for some $\mu \geq 0$, the regul\textcolor{black}{ar}izer $\pp_n(\lambda_n,\cdot)$ is assumed to be $\mu$-amenable, in the sense that
\begin{itemize}
    \item[(i)] $\rho \mapsto p(\lambda_n,\rho)$ is symmetric around zero and $p_n(\lambda_n,0) = 0$.
    \item[(ii)] $\rho \mapsto p(\lambda_n,\rho)$ is non-decreasing on $\Rb_+$.
    \item[(iii)] $\rho \mapsto p(\lambda_n,\rho) / \rho$ is non-increasing on $\Rb_+\setminus \{0\}$.
    \item[(iv)] $\rho \mapsto p(\lambda_n,\rho)$ is differentiable for any $\rho \neq 0$.
    \item[(v)] $\underset{\rho \rightarrow 0^+}{\lim} p'(\lambda_n,\rho) = \lambda_n$.
    \item[(vi)] $\rho \mapsto p(\lambda_n,\rho) + \mu \rho^2/2$ is convex for some $\mu \geq 0$. \\
    \mds
    The regularizer $\pp_n(\lambda_n,.)$ is said $(\mu,\gamma)$-amenable if, in addition,
    \item[(vii)] There exists $\gamma \in (0,\infty)$ such that $p'(\lambda_n,\rho) = 0$ for $\rho \geq \lambda_n \gamma$.
\end{itemize}
}
\end{cond}

\mds

The latter assumption provides regularity conditions to potentially manage non-convex penalty functions. These regularity conditions are the same as in~\cite{loh2017statistical, loh2017support, loh2015regularized}.
Some usual penalties are the Lasso, the SCAD (\cite{fan2001variable}) and the MCP (\cite{zhang2010nearly}),
given by
\begin{equation*}
\begin{array}{llll}
\mathbf{Lasso:} \;\; p(\lambda_n,\rho)  & = & \lambda_n |\rho|, \\
\mathbf{MCP:} \;\; p(\lambda_n,\rho)  & = & sign(\rho) \lambda_n \int^{|\rho|}_0 \{1-z/(\lambda_n b_{mcp})\}_+ dz, \\
\mathbf{SCAD:} \;\; p(\lambda_n,\rho)  & = & \begin{cases}
\lambda_n |\rho|, & \text{for} \; \; |\rho| \leq \lambda_n, \\
-(\rho^2-2b_{scad}\lambda_n|\rho|+\lambda^2_n)/\{2(b_{scad}-1)\}, & \text{for} \;\; \lambda_n \leq |\rho| \leq b_{scad} \lambda_n, \\
(b_{scad}+1)\lambda^2_n/2, & \text{for} \;\; |\rho| > b_{scad} \lambda_n,
\end{cases}
\end{array}
\end{equation*}
where $b_{scad}>2$ and $b_{mcp}>0$ are fixed parameters for the SCAD and MCP respectively. The Lasso is a $\mu$-amenable regularizer, whereas the SCAD  and the MCP regularizers are
$(\mu,\gamma)$-amenable. More precisely, $\mu$ is equal to zero, $1/(b_{scad}-1)$ or $1/b_{mcp}$ for the Lasso, SCAD or MCP respectively.

\mds

As for many parametric models, numerous empirical log-likelihoods associated to copulas are not concave
functions in their parameters, at finite distance and globally on $\Theta_m$. Moreover,
this is still the case for some popular regularizers, as SCAD.
The restricted strong convexity is a key ingredient that allows the management of non-convex loss functions. Intuitively, we would like to handle a loss function that locally admits some curvature around its optimum. The latter one can be due to the discrepancy between the empirical loss and the ``true'' loss, to a non-convex penalty or even to the use of pseudo-observations instead of usual ones.

\mds

We say that an empirical loss function $\Lb_n$ satisfies the restricted strong convexity condition (RSC) at $\theta$ if there exist two positive functions
$ \alpha_1,\alpha_2$ and two nonnegative functions \textcolor{black}{$\nu_1,\nu_2 $} of $(\theta,n,d)$ such that, for any $\Delta \in \Rb^d$,
\textcolor{black}{\begin{eqnarray*}
\langle\nabla_{\theta} \Lb_n(\theta + \Delta) - \nabla_{\theta} \Lb_n(\theta),\Delta \rangle  & \geq & \alpha_1 \|\Delta\|^2_2 - \nu_1  \|\Delta\|^2_1,\; \text{if } \|\Delta\|_2 \leq 1,
\label{RSC_1} \\
\langle\nabla_{\theta} \Lb_n(\theta + \Delta) - \nabla_{\theta} \Lb_n(\theta),\Delta \rangle & \geq & \alpha_2 \|\Delta\|_2 - \nu_2  \|\Delta\|_1,\; \text{if } \|\Delta\|_2 > 1.
\label{RSC_2}
\end{eqnarray*}}
Note that the (RSC) property is fundamentally local and that $\alpha_k,\nu_k$, $k\in \{1,2\}$ depend on the chosen $\theta$.
To weaken notations, we simply write $\alpha_k$ and $\nu_k$, $k\in \{1,2\}$, by skipping their implicit arguments $(\theta,n,d)$.

\begin{thm}[Poignard and Fermanian, 2021]\label{bound_param}
Suppose the objective function $\Gb_n(\cdot,\widehat\Uc): \Theta_m\subset\Rb^{p_m} \rightarrow \Rb$ satisfies the (RSC) condition at $\theta_m^*$. Moreover, $\pp(\lambda_n,\cdot)$
is assumed to be $\mu$-amenable, with $3\mu < 4\alpha_1$ and $4R\nu_2 \leq \alpha_2$. Assume
\begin{equation} \label{tuning}
4 \max\Big\{ \|\nabla_{\theta} \Gb_n(\theta_m^*,\widehat\Uc)\|_{\infty},2R\nu_1 \Big\} \leq \lambda_n \leq \cfrac{\alpha_2}{6R}\cdot
\end{equation}
Then, for every $n$, any stationary point $\hat{\theta}_{n,m}$ of (\ref{Mestimator_theta}) satisfies
\begin{equation*}
    \|\hat{\theta}_{n,m} - \theta_m^*\|_2 \leq \cfrac{6 \lambda_n \sqrt{k_m} }{4 \alpha_1 - 3 \mu}, \; \text{and}\;\; \|\hat{\theta}_{n,m} - \theta_m^*\|_1 \leq
\cfrac{6 (16 \alpha_1 - 9\mu)}{(4\alpha_1 - 3\mu )^2 } \lambda_n k_m. 
\end{equation*}
\end{thm}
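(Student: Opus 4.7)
The plan is to adapt the non-convex regularized M-estimator machinery of Loh-Wainwright, as extended in~\cite{poignard2021finite}, to the sieve setting with parameter space $\Theta_m$. Throughout, set $\Delta := \hat\theta_{n,m} - \theta_m^*$ and use the implicit side-constraint $\|\theta\|_1 \leq R$ built into the optimization~(\ref{Mestimator_theta}), so that $\|\Delta\|_1 \leq 2R$. The ambient dimension $p_m$ will enter only through the quantities already controlled in~(\ref{tuning}), so no genuinely new ideas beyond the fixed-dimensional case are required for the sieve extension.

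First, I would exploit the first-order necessary condition at the stationary point $\hat\theta_{n,m}$. Taking the inner product with $\theta_m^* - \hat\theta_{n,m}$ and rearranging yields
\begin{equation*}
\langle \nabla_\theta \Gb_n(\hat\theta_{n,m}, \widehat\Uc) - \nabla_\theta \Gb_n(\theta_m^*, \widehat\Uc), \Delta \rangle \leq \langle -\nabla_\theta \Gb_n(\theta_m^*, \widehat\Uc), \Delta\rangle + \langle -\nabla_\theta \pp_n(\lambda_n, \hat\theta_{n,m}), \Delta \rangle.
\end{equation*}
The first right-hand term is bounded by H\"older and~(\ref{tuning}) as $\tfrac{\lambda_n}{4}\|\Delta\|_1$. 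For the second, Condition~\ref{amenable} lets one decompose $p(\lambda_n, \rho) = \lambda_n|\rho| - q_{\lambda_n}(\rho)$ with $q_{\lambda_n}(\rho) + \mu\rho^2/2$ convex; the resulting ``triangle-plus-quadratic'' bound applied coordinatewise and combined with $\theta^*_{m,i}=0$ for $i\notin \Ac_m$ yields the standard estimate
\begin{equation*}
\langle -\nabla_\theta \pp_n(\lambda_n, \hat\theta_{n,m}), \Delta\rangle \leq \lambda_n \|\Delta_{\Ac_m}\|_1 - \lambda_n \|\Delta_{\Ac_m^c}\|_1 + \tfrac{\mu}{2}\|\Delta\|_2^2.
\end{equation*}

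Second, I would plug in the (RSC) lower bound on the LHS. In the regime $\|\Delta\|_2 \leq 1$, this gives $\alpha_1\|\Delta\|_2^2 - \nu_1 \|\Delta\|_1^2$; using $\|\Delta\|_1 \leq 2R$ and $2R\nu_1 \leq \lambda_n/4$ from~(\ref{tuning}), the term $\nu_1\|\Delta\|_1^2$ is absorbed into $\tfrac{\lambda_n}{4}\|\Delta\|_1$. Collecting everything and splitting $\|\Delta\|_1 = \|\Delta_{\Ac_m}\|_1 + \|\Delta_{\Ac_m^c}\|_1$ produces the cone inequality $\|\Delta_{\Ac_m^c}\|_1 \leq 3 \|\Delta_{\Ac_m}\|_1$, hence $\|\Delta\|_1 \leq 4\|\Delta_{\Ac_m}\|_1 \leq 4\sqrt{k_m}\,\|\Delta\|_2$. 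Substituting this back yields an inequality of the form $(\alpha_1 - 3\mu/4)\|\Delta\|_2^2 \leq \tfrac{3\lambda_n}{2}\sqrt{k_m}\,\|\Delta\|_2$, from which the stated $\ell_2$ bound follows; the $\ell_1$ bound is then an immediate consequence of the cone condition and the $\ell_2$ bound.

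Finally, I would rule out the alternative regime $\|\Delta\|_2 > 1$. There the second RSC inequality gives $\alpha_2 \|\Delta\|_2 - \nu_2 \|\Delta\|_1$ as a lower bound; using $\|\Delta\|_1 \leq 2R$ and $4R\nu_2 \leq \alpha_2$, the LHS is at least $\tfrac{\alpha_2}{2}\|\Delta\|_2$. The RHS, bounded by $cR\lambda_n$ for some explicit constant, together with $\lambda_n \leq \alpha_2/(6R)$ forces $\|\Delta\|_2 \leq 1$, a contradiction. The main obstacle I anticipate is the careful bookkeeping of the amenability step --- getting the constants right when decomposing $\pp_n$ and combining with the RSC remainder $\nu_1\|\Delta\|_1^2$ so that everything absorbs cleanly into $\tfrac{\lambda_n}{2}\|\Delta\|_1$; the sieve aspect (variable $p_m$) poses no genuine new difficulty, since the bound is finite-sample and $m$ enters only through $k_m$ and the tuning-parameter lower bound.
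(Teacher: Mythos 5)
Your sketch is correct and follows exactly the route the paper intends: the paper's own ``proof'' is a one-line deferral to Theorem 1 of Poignard and Fermanian (2021), i.e., the Loh--Wainwright machinery (stationarity condition, H\"older bound via the tuning condition, $\mu$-amenable penalty decomposition, RSC with absorption of the $\nu_1\|\Delta\|_1^2$ tolerance, cone condition, and exclusion of the $\|\Delta\|_2>1$ regime via the second RSC inequality and $\lambda_n\le\alpha_2/(6R)$), which is precisely what you reconstruct. The only caveat is bookkeeping of constants in the amenability step (your intermediate coefficients would in fact yield slightly sharper bounds that still imply the stated ones), which you correctly flag as the remaining work.
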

The proof straightforward extends Theorem 1 in~\cite{poignard2021finite}.
In particular, if the ``true'' generator $g$ of the data belongs to some subset $\Gc_{m_0}$ (the model is correctly specified from
 the index $m_0$ on), then $\theta_{m_0}^*=\theta_0$.
When we work with true realizations of $\U$, a typical behavior is $\lambda_n \asymp (\ln p_n/n)^{1/2}$ (see~\cite{loh2017statistical}). As discussed in~\cite{poignard2021finite}, this rate is getting worse in general when dealing with pseudo-observations in $\widehat\Uc$: $\lambda_n$ will be at most of order $d p_n^2 /\sqrt{n}$.

\end{document}

\begin{figure}[htb]
    \centering
    \begin{tikzpicture}[node distance = 1.5cm]
    
    \node[block] (dataset) {Dataset $(X_1, \dots, X_n)$};
    
    \node[below of=dataset] (belowDS) {};
    \node[block, below of=dataset, node distance = 2cm] (pseudoobs) {Pseudo-observations on $[0,1]^d$ \\
    $\hat U_{i,j} = \hat F_j(X_{i,j})$};
    \path[draw, arr] (dataset) -- (pseudoobs) node [midway](belowDS){} ;
    
    \node[block, right of=belowDS, node distance = 4cm] (empcdf) {Marginal empirical cdfs \\ $\hat F_1, \dots, \hat F_d$};
    \path[draw, arr] (empcdf) -| (pseudoobs);
    \path[draw, arr] (dataset) -| (empcdf);
    
    \node[block, left of=belowDS, node distance = 4.2cm] (corrmat) {Estimated correlation \\ matrix $\hat \Sigmabf$};
    \path[draw, arr] (dataset) -| (corrmat);
    
    \node[block, below of=pseudoobs, node distance = 6.5cm] (pseudoobsRd) {Pseudo-observations on $\Rb^d$ \\ $\hat Z_{i,j} = Q_{\hat g}(\hat U_{i,j})$};
    \path[draw, arr] (pseudoobs) -- (pseudoobsRd) node [pos=0.82](belowPseudoObs){} ;
    
    \node[block, right of=belowPseudoObs, node distance = 5cm] (univQuantiles) {Estimated univariate quantile $Q_{\hat g}$ \\
    computed using the current $\hat g^{(N)}$};
    \path[draw, arr_iter] (univQuantiles) -| (pseudoobsRd) ;
    
    \node[block, below of=pseudoobsRd, node distance = 2cm] (pseudoobsRdcomp) {Imputation of the missing pseudo-observations \\ to have complete vectors $\hat \Z_1, \dots, \hat \Z_n$ using $\hat g$};
    \path[draw, arr_iter] (pseudoobsRd) -- (pseudoobsRdcomp) ;
    \path[draw,->] (corrmat) |- (pseudoobsRdcomp);
    
    \node[block, below of=pseudoobsRdcomp, node distance = 2cm] (transSample) {Transformed sample \\
    $Y_i = \psi_a \big(\hat \Z_{i}^\top \, \hat \Sigmabf \, \hat \Z_{i} \big)$
    };
    \path[draw, arr_iter] (pseudoobsRdcomp) -- (transSample) ;
    \path[draw,->] (corrmat) |- (transSample);
    
    \node[block, below of=transSample, node distance = 2cm] (esti_g) {Estimated generator using the one-dimensional sample $(Y_1, \dots, Y_n)$ \\
    $\tilde g(t) = s_d^{-1} \psi_a'(t) t^{-d/2+1} (n h)^{-1} \sum_{i=1}^n \bigg(
    K \Big( \big(\psi_a(t) - Y_i\big)/h\Big)
    + K \Big( \big(\psi_a(t) + Y_i\big)/h\Big)
    \bigg)$
    };
    \path[draw, arr_iter] (transSample) -- (esti_g) ;
    
    \node[block, below of=esti_g, node distance = 2cm] (normalized_g) {Normalized generator \\
    $\hat g = \texttt{Algorithm \ref{algo:normalization_g}} (\tilde g)$
    };
    \path[draw, arr_iter] (esti_g) -- (normalized_g) ;
    \path[draw, arr_iter] (normalized_g) -| (univQuantiles.342) node [pos=0.25, below, align=center]{if not converged, $N \leftarrow N+1$}; 
    
    \node[block, below of=normalized_g, node distance = 2cm] (final_g) {Final estimator $\hat g^{(\infty)}$};
    \path[draw, arr_startEnd] (normalized_g) -- (final_g) node [pos=0.5, right]{if converged}; 
    
    \node[block, above of=univQuantiles, node distance = 3.7cm] (init_g) {Initial value $\tilde g^{(0)}$};
    
    \node[block, below of=init_g] (initNormed_g) {Normalized initial value \\
    $\hat g^{(0)} = \texttt{Algorithm \ref{algo:normalization_g}} (\tilde g^{(0)})$
    };
    \path[draw,arr] (init_g) -- (initNormed_g) ;
    \path[draw,arr_startEnd] (initNormed_g) -- (univQuantiles) node [pos=0.5, right]{Start of the iterations, $N \leftarrow 0$};
    \path[draw,arr] (pseudoobs.340) |- (init_g) ;

    \node (initbox) [draw, rounded corners, dashed, inner sep=1ex, 
        fit=(dataset) (corrmat) (init_g) (initNormed_g),
        label={[node font=\small\itshape\bfseries,rotate=90,anchor=south]left: Initialization}
        ]
        {};
                
    \node (lieb) [draw, rounded corners, densely dotted, black, inner sep=1ex, 
        fit=(transSample) (esti_g),
        label={[node font=\small\itshape\bfseries,rotate=90,anchor=south, name=labelLieb]left: Liebscher's procedure}
        ]
        {};
                
    \node (iteration) [draw, blue, rounded corners, dashed, inner sep=1ex, 
        fit=(lieb) (normalized_g) (univQuantiles) (labelLieb),
        label={[node font=\small\itshape\bfseries,rotate=90,anchor=south,color=blue]left: Iterative procedure}
        ]
        {};
    
    \end{tikzpicture}
    
    \caption{\textnormal{Flowchart of the iterative estimation procedure MECIP, with missing values. The thick blue loop denotes the iterative procedure.}}
    \label{fig:my_label}
\end{figure}